\newtheorem{theorem}{Theorem}[section]
\newtheorem{proposition}[theorem]{Proposition}
\newtheorem{lemma}[theorem]{Lemma}
\newcommand{\Nb}{\mathbb N}
\newcommand{\Rb}{\mathbb R}
\newcommand{\Ac}{\mathcal A}
\newcommand{\Ec}{\mathcal E}
\newcommand{\set}[2]{\left\{ #1 \mathrel{}\middle|\mathrel{} #2 \right\}}
\newcommand{\ds}{\displaystyle}
\newcommand{\indic}[1]{\mathbf{1}_{#1}}			
\newcommand{\prob}[1]{\mathbf{P}\left\{#1\right\}} 	
\newcommand{\probc}[2]{\mathbf{P}\set{#1}{#2}} 	
\newcommand{\probcin}[3]{\mathbf{P}_{#1}\set{#2}{#3}} 	
\newcommand{\probin}[2]{\mathbf{P}_{#1}\left\{#2\right\}} 	
\newcommand{\espin}[2]{\mathbf{E}_{#1}\left[#2\right]}		
\newcommand{\Brak}[2]{\left\langle #1 , #2 \right\rangle}
\newcommand{\init}{{\mathrm in}}
\newcommand{\qsd}{{\mathrm qsd}}
\newcommand{\eq}{{\mathrm eq}}
\newcommand{\stat}{{\mathrm stat}}
\titleformat*{\section}{\large \bfseries}
\numberwithin{equation}{section}
\title{Quasi-stationary distribution and metastability for the stochastic Becker-D\"oring model} 
\author{Erwan~Hingant\thanks{Departamento de Matemática, Universidad del Bío-Bío, Chile. E-mail: \texttt{ehingant@ubiobio.cl}} \and Romain~Yvinec\thanks{PRC, INRA, CNRS, IFCE, Université de Tours, 37380 Nouzilly, France. E-mail: \texttt{romain.yvinec@inrae.fr}}
\thanks{Associate researcher at Cogitamus laboratory.}
}
\date{\small \today}
\begin{document}

\maketitle

\begin{abstract}We study a stochastic version of the classical Becker-D\"oring model, a well-known kinetic model for cluster formation that predicts the existence of a long-lived metastable state before a thermodynamically unfavorable nucleation occurs, leading to a phase transition phenomena. This continuous-time Markov chain model has received little attention, compared to its deterministic differential equations counterpart. We show that the stochastic formulation leads to a precise and quantitative description of stochastic nucleation events thanks to an exponentially ergodic quasi-stationary distribution for the process conditionally on nucleation  has not yet occurred.

\smallskip

\noindent \footnotesize {\bfseries Keywords}: Quasi-stationary distribution; Metastability; Exponential ergodicity; Becker-D\"oring; Nucleation; Phase transition

\noindent \footnotesize {\bfseries AMS MSC 2010}: 82C26; 60J27.

\end{abstract}

\section{Introduction}

The Becker-Döring model is a kinetic model for phase transition phenomenon  represented schematically by the reaction network
\begin{equation}\label{eq:chemreact_BD}
    \emptyset \ds \xrightleftharpoons[b_{2}]{a_1 z^2}  C_{2} \,, \quad \text{and} \quad C_i  \xrightleftharpoons[b_{i+1}]{a_{i}z}  C_{i+1} \,, \quad   i=2,3,\ldots
\end{equation}
We assume an infinite reservoir of monomer, cluster of size $1$, represented in \eqref{eq:chemreact_BD} by $\emptyset$. The parameter $z$ represents the \textit{fixed} concentration of monomer and will play a key role in the sequel. A cluster of size  $i\geq 2$, whose population is represented in \eqref{eq:chemreact_BD} by $C_i$, lengthen to give rise to a cluster of size $i+1$ at rate $a_iz$ or shorten to give rise a cluster of size $i-1$ at rate $b_i$. The rate of apparition of a new cluster of size $2$ is $a_1z^2$ (without loss of generality). All parameters are positives.

The Becker-D\"oring (BD) model goes back to the seminal work \emph{``Kinetic treatment of nucleation in supersaturated vapors''}~in  \cite{Becker1935}.
Since then, the model met very different applications ranging from physics to biology. From the mathematical point of view, this model received much more attention in the deterministic literature than the probabilistic one. We refer to our review \cite{Hingant2017} for historical comments and detailed literature review on theoretical results from the deterministic side. See also \cite{Hingant2019,Sun2018} for recent results
on functional law of large number and central limit theorem.
 
The model was initially designed to explain critical phase condensation phenomena where macroscopic droplets self-assemble and segregate from an initially supersaturated homogeneous mixture of particles, at a rate that is exponentially small in the excess of particles. This led to important applications in kinetic nucleation theory \cite{Schmelzer2005}. Mathematical studies in the 90's showed that (in the deterministic context), departing from certain initial conditions, the size distribution of clusters reaches quickly a metastable configuration composed of "small" clusters, and remains arbitrary close to that state for a very large time, before it converges to the true stationary solution that leads to "infinitely large" clusters (interpreted as droplets) \cite{Kreer1993,Penrose1989}. 

Our objective in this note is to re-visit the metastability theory in Becker-D\"oring model in terms of quasi-stationary distribution (QSD) for the associated continuous-time Markov chain. We prove existence, uniqueness and exponential ergodicity of a QSD for the BD model conditioned on 
the event that large clusters have not yet appeared. We prove furthermore that the convergence rate towards the QSD is faster than the rate of apparition of (sufficiently) large clusters. Quantitative results are obtained thanks to a surprisingly simple analytical formula for the QSD, that provides also an exact rate of apparition of stable large clusters, consistently with the original heuristic development of Becker and D\"oring.

\noindent \textbf{Outline}: We introduce in Sec. \ref{sec:model} the stochastic BD model. In Sec. \ref{sec:chainX}, we gather few (known) results on a related Birth-Death process, and from deterministic modeling of the BD model. Then, we prove exponential decay, in total variation, of the BD process towards its stationary measure in Sec. \ref{sec:long-time}. For any $n\geq 2$, similar results are obtained in Sec. \ref{sec:qsd} for the process conditionally on no cluster larger than $n$ are formed. An estimate on the time for the first cluster larger than $n$ to appear is provided in Sec. \ref{sec:taun}. The last, but not least, Sec. \ref{sec-final} gathers our results in a quantitative way, and proves that the QSD can be interpreted as a long-lived metastable state, when $n$ equals the critical nucleus size.

\noindent \textbf{Notation}: We denote by $\Nb_0$ the set of non-negatives integers $\Nb_0 = \{0,1,2\ldots\}$, $\Nb_{i}$ the set of integers greater or equal to $i$,  $[\![\ ]\!]$ for integers interval. For a set $A$, $A^c$ is its complement, $\mathbf 1_A$ the indicator function on it, $\# A$ its cardinality. For two sets $A$ and $B$ {their intersection is denoted by $A,\, B$}. Also, $\mathbf 1_x=\mathbf 1_{\{x\}}$ and $\mathbf 1$ (resp. $\mathbf 0$) is the constant function equal to $1$ (resp. $0$). For two numbers $a,b$, their minimum is $a\wedge b$. We denote by $\|\mu -\nu\|$ the total variation distance between two probability measure $\mu$ and $\nu$ on a countable state space $\mathcal S$ namely
\[\|\mu-\nu \| = \frac 1 2 \sum_{x\in\mathcal S} |\mu(x)-\mu(x)| = \inf_{\gamma\in\Gamma} \int_{S\times S} \mathbf 1_{x\neq y}\, \gamma(dx,dy) \,,\]
where $\Gamma$ is the set of probability measures on $\mathcal S\times \mathcal S$ with marginals $\mu$ and $\nu$. $\mathbf E$ (resp.  $\mathbf E_\mu$) denotes the expectation with respect to the usual probability measure $\mathbf P$ (resp. $\mu$).  

Below, we set $\Ec = \ell^1(\Nb_2,\Nb_0)$  the space of summable $\Nb_0$-valued sequences indexed by $\Nb_2$.

\section{The model}\label{sec:model}

The stochastic Becker-D\"oring (BD) process is a continuous-time Markov chain on the countable state space $\Ec$ with infinitesimal generator $\mathcal A $, given for all $\psi$ with finite support on $\Ec$ and $C\in \Ec$, by
\begin{equation*} 
    \Ac\psi(C) = \sum_{i=1}^{+\infty} \Big( a_i z C_i  [\psi(C + \Delta_i)-\psi(C)] + b_{i+1}  C_{i+1}[\psi(C- \Delta_{i})-\psi(C)] \Big)
\end{equation*}
with the convention $C_1=z$,  $\Delta_1=\mathbf e_2 $ and $\Delta_i =\mathbf e_{i+1} - \mathbf e_i$, for each $i\geq 2$, where $\{\mathbf e_2,\mathbf e_3,\ldots\}$ denotes the canonical basis of $\Ec$ namely, $e_{i,k}=1$ if $k=i$ and $0$ otherwise. 

We shall however use a different approach, modeling explicitly the size of each individual cluster. On a sufficiently large probability space $(\Omega,\mathcal F,\mathbf P)$, we introduce:
\begin{itemize}[wide,nosep,labelindent=0pt]
\item $N_1,N_2,\ldots$ a denumerable family of independent Poisson point measure with intensity the Lebesgue measure $dsdu$ on $\Rb_+^2$.
\item  $T_1,T_2,\ldots$ a collection of random times such that the $T_{k}-T_{k-1}$ are independent exponential random variable of parameter $a_1z^2$,  independent from the above Poisson point measure as well, with $T_0=0$.
\end{itemize}
Let $\Pi^\init$ a probability distribution on $\Ec$ and $\mathbf{C}(0)=(C_2(0),C_3(0),\ldots)$ an $\Ec$-valued random variable distributed according to $\Pi^\init$. We denote by $N^\init=\sum_{i=2}^\infty C_i(0)$. By construction $N^\init < \infty$ almost surely (\emph{a.s.}). Then, given $\mathbf{C}(0)$, we define $X_1(0),X_2(0),\ldots$ a denumerable collection of random variables on $\Nb_2$ such that, \emph{a.s.} for each $i\geq 2$,
\begin{equation}\label{eq:def-init-X_k}
C_i(0) = \# \set{k\in[\![1,N^\init]\!]}{X_k(0)=i}\,,
\end{equation}
and $X_k^\init = 2$ for all  $k > N^\init$. Note this construction may be achieved by a bijective \emph{labeling} function\footnote{A function (that exists) which associates, to each $c\in\Ec$ such that \smash{$N=\sum_{i=2}^\infty c_i<\infty$}, a unique sequence $(x_1,\ldots,x_N)$ in $\Nb_2$ satisfying $c_i=\#\set{k\in[\![1,N]\!]}{x_k=i}$.}.   Finally, we consider the denumerable collection of stochastic processes $X_1,X_2,\ldots$ on $\Nb_1$ solution of the stochastic differential equations, for all $t\geq 0$ and $k\geq 1$, 
\begin{equation}\label{eq:dif-eq-Xi}
X_k(t)  =   X_k(0) + \sum_{i=2}^\infty \int_0^t \int_{\Rb^+} \mathbf{1}_{s>T_{k-N^\init}}  \mathbf{1}_{X_k(s^-)=i} \big(\mathbf{1}_{u\leq a_i z} - \mathbf{1}_{a_iz <  u \leq  a_iz + b_i} \big)N_{k}(ds,du)\,,
\end{equation}
where by convention $T_k = 0$ if $k\leq 0$. The pathwise construction \eqref{eq:dif-eq-Xi} is what we call thereafter the \emph{particle description of the BD process}. The interpretation is clear: $X_k(t)$ denotes the size of the cluster labelled by $k$ at time $t$; for $k\leq N^\init$, clusters are initially "actives" while for $k>N^\init$ clusters are initially ``inactives'' at state $2$, and become ``activated'' at the random arrival times $T_{k-N^\init}$.  
We ensured $X_k(0)<\infty$ \emph{a.s.} because the $C_i(0)$'s are integer-valued random variables and belong to $\Ec$, the sequence $\mathbf C(0)$ is \emph{a.s.} equally $0$ from a certain range. Thus, local existence of \emph{c\`adl\`ag} processes $t\mapsto X_k(t)$ on $\Nb_1$ solution to \eqref{eq:dif-eq-Xi} can classically be obtained inductively. It is clear from \eqref{eq:dif-eq-Xi} that each $X_k$ evolves like a Birth-Death process for $t>T_k$ (that will be detailed in the next section \ref{sec:chainX}) and are mutually independent conditionally to their initial value. The Reuter's criterion gives a well-known necessary and sufficient condition so that each process $X_k$ is non-explosive, namely 
\begin{equation}\label{eq:ass_nonexplosive_birthdeath} \tag{H0}
    \sum_{n=2}^{\infty}Q_nz^n\left( \sum_{k=n}^{\infty} \frac{1}{a_kQ_k z^{k+1}}\right) =\infty\,, \ \text{with} \ Q_1=1\,, \ Q_i=\frac{a_1 a_2\cdots a_{i-1}}{b_2\cdots b_{i}}\,, \ i\geq 2\,.
\end{equation}
It is now convenient to go back to the original description at stake. The number of "active" clusters at time $t\geq 0$ is given by the counting process 
\begin{equation*}
N(t)= N^\init+ \sum_{k\geq 1} \indic{t\geq T_k} \,,
\end{equation*}
while the number of cluster of size $i\geq 2$ is
\begin{equation*}
C_i(t) = \#\set{k\in[\![1,N(t)]\!]}{X_k(t)=i}\,.
\end{equation*}
Now, noticing that $C_i(t)=\sum_{k=1}^{N(t)} \mathbf 1_{i} (X_k(t))$, we can prove from standard stochastic calculus that the process $\mathbf C$ given by $\mathbf C (t)=\left(C_2(t),C_3(t),\ldots\right)$ for all $t \geq 0$ has infinitesimal generator $\mathcal A$, and being non-explosive under condition  \eqref{eq:ass_nonexplosive_birthdeath}, it is the unique regular jump homogeneous Markov chain on $\Ec$ with infinitesimal generator $\mathcal A$ and initial distribution $\Pi^\init$, say the BD process. The proof is left to the reader and follows from classical theory, \emph{e.g.} \cite{Anderson1991}. In the sequel, $\mathbf C(t)$ always denote a BD process, and $\probin{\Pi^\init}{\mathbf C \in \cdot}$ its (unique) finite dimensional probability distribution given that $\mathbf C(0)$ is distributed according to  $\Pi^\init$. {We also set by convention $\mathbf P_C = \mathbf P_{\delta_C}$ for a deterministic $C\in \Ec$ and we recover $\probin{\Pi^\init}{\cdot} = \sum_{C\in \Ec} \probin{C}{\cdot} \, \Pi^\init(C)$.}
\section{Behaviour of one cluster}\label{sec:chainX}

Let $X$ the continuous-time Markov chain on $\Nb_1$ with transition rate matrix $(q_{i,j})_{i,j\geq 1}$ whose nonzero entries are 
\begin{equation}\label{eq:marche_aleatoire_absorbe}
q_{i,i+1}  = a_iz\,,\  q_{i,i-1} = b_{i}\,, \ q_{i,i}  =  -(a_iz+b_i)\,, \qquad i\geq 2\,.
\end{equation}
Remark that $i=1$ is absorbing in agreement with \eqref{eq:chemreact_BD}: when a cluster size reaches $1$, it ``leaves the system''. 
We shall assume standard hypotheses in the BD model \cite{Kreer1993,Penrose1989}:
\begin{equation}\label{hyp:H1}\tag{H1}
    \lim_{i\to \infty} b_i/a_i = z_s>0 \,, \quad \text{and} \quad \lim_{i\to \infty} b_{i+1}/b_i = 1\,.
\end{equation}
Hypothesis \eqref{hyp:H1} then guarantee \eqref{eq:ass_nonexplosive_birthdeath} for $z\neq z_s$ for the following reason. The convergence of both series 
\begin{equation*}
    \sum_{k=2}^\infty Q_kz^k \quad \text{and} \quad \sum_{k=2}^\infty \frac{1}{a_kQ_kz^{k+1}}\,,
\end{equation*}
depends on the value of $z$. Indeed, $z_s$ is the radius of convergence of the first series while the second series converges for $z>z_s$ and diverges for $z<z_s$.  We have a dichotomy in the long time behavior of $X$ related to this value. The case $z<z_s$ is  called the \emph{sub-critical case}, for which absorption at state $1$ is certain and the expected time of absorption is finite (also called ergodic absorption). The case $z>z_s$ is called the \emph{super-critical case} and absorption at $1$ is not certain (also called transient absorption), and the probability to be absorbed at $1$ is, according to \cite{Karlin1957},
\begin{equation}\label{eq:def-J}
    \lim_{t\to\infty} p_{i1}(t) = J\sum_{k=i}^{\infty} \frac{1}{a_kQ_kz^{k+1}}\,, \qquad \text{with} \ J=\left(\sum_{k=1}^{\infty} \frac{1}{a_kQ_kz^{k+1}}\right)^{-1}
\end{equation}
where $p_{ij}(t) = \probc{X(t)=j}{X(0)=i}$ the probability transition function of $X$. The limit case $z=z_s$ is somewhat technical and depends more deeply on the shape of the coefficients. It is not considered in this note. 

Following \cite{Kreer1993}, a precise long time estimate on transient states can be obtained, under the hypothesis \eqref{hyp:H1} and
\begin{equation}\label{hyp:H2}\tag{H2}
    \frac{b_{i+1}}{b_i}-1 = O(i^{-1})\,, \quad \frac{a_{i+1}}{a_i}-1 = O(i^{-1})\,, \quad  a_i =O(i)\quad \text{and}\quad \lim_{i\to+\infty} a_i = +\infty\,.
\end{equation}
In such a case, the infinite matrix $(q_{i,j})_{i,j\geq 2}$ in \eqref{eq:marche_aleatoire_absorbe} is self-adjoint on the Hilbert space $H$ consisting of the real sequences $\mathbf x=(x_2,x_3,\ldots)$ whose norm is
\begin{equation*}
\|\mathbf x\|_H = \sqrt{\sum_{i=2}^\infty \frac{x_i^2}{Q_iz^i}} <\infty\,.
\end{equation*}
We denote by $\langle \cdot, \cdot \rangle_H$ the associated scalar product. 
It turns that $(q_{i,j})_{i,j\geq 2}$ has a negative maximum eigenvalue $-\lambda$, and the following estimate holds for any $i\geq 2$, 
\begin{equation} \label{eq:cv-transitions}
    \|(p_{ij}(t))_{j\geq 2}\|_H  \leq  e^{-\lambda t}\|(p_{ij}(0))_{j\geq 2}\|_H =\frac{e^{-\lambda t}}{\sqrt{Q_iz^i}}\,.
\end{equation}
We will also consider the chain $X$ conditioned to remain below a given state $n+1\geq 2$. We define the exit time 
\begin{equation}\label{eq:def_Tn}
T_n=\inf\left(t\geq 0 \mid X(t)\notin [\![1,n]\!] \right)=\inf\left(t\geq 0 \mid X(t)\geq n+1\right)\,.
\end{equation}
Let $Y$ the birth-death process defined by $Y(t)=X(t \wedge T_n)$. Hence, $Y$ is absorbed either in $1$ or $n+1$, and the probability to be absorbed at $1$ (without visiting state $n+1$) is, according to \cite[p.387]{Karlin1957},
\begin{equation}\label{eq:def-Jn}
    \lim_{t\to +\infty} p_{i1}^n(t) = J_{n} \sum_{k=i}^{n} \frac{1}{a_kQ_kz^{k+1}} \,,\qquad \text{with} \ J_{n} =  \left(\sum_{k=1}^{n} \frac{1}{a_kQ_kz^{k+1}}\right)^{-1}\,,
\end{equation}
where $p_{ij}^n(t)=\mathbf P \left(Y(t) = j \mid Y(0)=i\right)$ is the probability transition function of $Y$ and clearly
\begin{equation}\label{eq:minorboundsTn}
\probc{T_n>t}{X(0)=i}\geq \lim_{t\to + \infty}\probc{T_n>t}{X(0)=i}=J_{n} \sum_{k=i}^{n} \frac{1}{a_kQ_kz^{k+1}}\,.
\end{equation}
Again, in \cite{Kreer1993}, the author shows that the truncated matrix $(q_{i,j})_{i,j=2,\ldots,n}$ is similar to a symmetric one and then there exists $\gamma_n>0$ such that for each $i=2,\ldots,n$,
\begin{equation}\label{eq:estim-pij}
    \sqrt{\sum_{j=2}^{n} \frac{p_{ij}^n(t)^2}{Q_jz^j} } \leq \frac{e^{-\gamma_n t}}{\sqrt{Q_iz^i}}\,.
\end{equation}
Note the probability to be absorbed in $1$ before time $t$, $p_{i1}^n(t)$, is monotonously increasing and $ \lim_{t\to +\infty} p_{i1}^n(t) = 1- \lim_{t\to+\infty} p_{i(n+1)}^n(t)$, thus we deduce that
\begin{equation*}
\sum_{j=2}^{n} \probc{Y(t) = j}{Y(0)=i\,,\ T_n>t}=\frac{\sum_{j=2}^{n} p_{ij}^n(t)}{1-p_{i(n+1)}^n(t)} \leq M_{i,n} e^{-\gamma_n t} 
\end{equation*}
where the constant $M_{i,n}$, obtained thanks to \eqref{eq:estim-pij}, Cauchy-Schwarz inequality and \eqref{eq:def-Jn}, is given by
\begin{equation*}
 M_{i,n} = \left(\frac{1}{Q_iz^i} \sum_{j=2}^{n} Q_jz^j  \right)^{\tfrac 1 2}\frac{1}{J_{n} \sum_{k=i}^{n} \frac{1}{a_kQ_kz^{k+1}}} \,.
\end{equation*}
We end this preliminary section, noticing that $X(t)=Y(t)$ on $\{T_n>t\}$, with 
\begin{equation}\label{eq:estime_absorbtion_kreer_n}
\probc{X(t)=1}{X(0)=i\,, \ T_n>t}  \geq 1- M_{i,n} e^{-\gamma_n t}\wedge 1 \,.
\end{equation}
\section{Long-time behaviour of the BD process} \label{sec:long-time}

In this section we are concerned with the long-time behaviour of the BD process. Formally the measure $\Pi^\eq$, given by
\[\Pi^\eq(C)= \prod_{i=2}^\infty e^{-c_i^\eq} \frac{(c_i^\eq)^{C_i} }{C_i!}\,, \qquad \text{with} \ c_i^\eq = Q_iz^i\]
for all $C\in\Ec$, satisfies  $\mathbf E_{\Pi^\eq}[\mathcal A\psi(C)]=0$ for any function $\psi$ on $\Ec$ with finite support\footnote{{In the sequel $C$ in expectation formula always refers to the free variable of integration.}}. Actually, $\Pi^\eq$ satisfies the detailed balance condition
$a_i z C_i \Pi^\eq(C)=b_{i+1}(C_{i+1}+1)\Pi^\eq(C+\Delta_i)$, 
for all $i\geq 1$ and all $C\in \Ec$ (with the convention that $C_1=z$), as a consequence of the relation $a_iQ_i = b_{i+1}Q_{i+1}$. In the sub-critical case, $\Pi^\eq$ is a probability measure on $\mathcal E$ (indeed $\Pi^\eq(\Nb_0^{\Nb_2})=1$ with support in $\Ec$ because of $\espin{\Pi^\eq}{\sum_{i=2}^\infty C_i} = \sum_{i=2}^\infty c_i^\eq <\infty$) and we prove exponential ergodicity towards $\Pi^\eq$. In the super-critical case, $\Pi^\eq$ is not a limiting distribution (and $\sum_{i=2}^\infty c_i^\eq =\infty$) but the measure defined by
\begin{equation}\label{eq:def-fi}
\Pi^\stat(C) = \prod_{i=2}^\infty e^{-f_i} \frac{(f_i)^{C_i} }{C_i!}\,, \qquad \text{with } f_i = JQ_iz^i \sum_{k=i}^\infty \frac{1}{a_kQ_kz^k}
\end{equation}
for all $C\in\Ec$, where $J$ is given in \eqref{eq:def-J}, characterizes the long-time behaviour of any finite-number of marginals. Now on, we note $\mathbf f = (f_i)_{i\geq 2}$, with $f_i$ defined in \eqref{eq:def-fi}.
\begin{theorem}\label{thm:limit-distribution} Under hypotheses \eqref{hyp:H1} and \eqref{hyp:H2}. Let $\Pi^\init$ a probability distribution on $\mathcal E$ such that
\begin{equation}\label{hyp:esp-Cin}
\espin{\Pi^\init}{\langle C,\sqrt{\mathbf Q}\rangle_H}<\infty
\end{equation} 
where $\sqrt{\mathbf Q} = (\sqrt{Q_iz^i})_{i\geq 2}$. With $\lambda>0$ introduced in Sec. \ref{sec:chainX}, see \eqref{eq:cv-transitions}, we have:
\begin{itemize}
\item In the sub-critical case ($z<z_s$), 
for all $t\geq 0$, 
    \[\|\probin{\Pi^\init}{\mathbf C(t) \in \cdot} - \Pi^\eq \| \leq  R^\init e^{-\lambda t}\]
    with  $R^\init = K(\espin{\Pi^\init}{\langle C,\sqrt{\mathbf Q}\rangle_H} + \espin{\Pi^\eq}{\Brak{C}{\sqrt{\mathbf Q}}_H})$ and  $K=(\sum_{k=2}^\infty c_i^\eq)^{\tfrac 1 2}$;
\item In the super-critical case ($z>z_s$), for all $t\geq 0$, and for all $n\geq 2$,
    \begin{equation*}
     \|\probin{\Pi^\init}{(C_2(t),\ldots, C_n(t)) \in \cdot} -  \Pi^\stat(\cdot\times \prod_{k=n+1}^\infty \Nb_0) \|      \leq  R^\init_n e^{-\lambda t} 
     \end{equation*}
     with $R^\init_n=K_n \espin{\Pi^\init}{\langle C,\sqrt{\mathbf Q}\rangle_H} + \|\mathbf f\|_H$ 
     and $K_n=(\sum_{k=2}^n c_i^\eq)^{\tfrac 1 2}$.
\end{itemize}
\end{theorem}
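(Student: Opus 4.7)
My plan is to exploit the particle description \eqref{eq:dif-eq-Xi} and the one-cluster decay estimate \eqref{eq:cv-transitions}. Applying Cauchy-Schwarz in $H$ to \eqref{eq:cv-transitions} yields survival sums $\sum_{j\geq 2}p_{ij}(t)\leq K e^{-\lambda t}/\sqrt{Q_iz^i}$ and their $n$-truncated versions $\sum_{j=2}^{n}p_{ij}(t)\leq K_n e^{-\lambda t}/\sqrt{Q_iz^i}$, and this single mechanism drives both cases.

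For the \emph{sub-critical case}, I construct on the same probability space a reference BD process $\mathbf{C}^{\eq}$ started from $\Pi^\eq$ by re-using the Poisson measures $N_k$ and activation times $T_k$ of \eqref{eq:dif-eq-Xi} for every cluster born after time~$0$. Since $\Pi^\eq$ is invariant, $\mathbf{C}^{\eq}(t)\sim\Pi^\eq$ for all $t\geq 0$, and the two processes coincide as soon as every initial cluster of \emph{either} process has been absorbed at state~$1$. The coupling inequality then gives
\[
\|\probin{\Pi^\init}{\mathbf{C}(t)\in\cdot}-\Pi^\eq\|\leq\espin{\Pi^\init}{\sum_{i\geq 2}C_i\sum_{j\geq 2}p_{ij}(t)}+\espin{\Pi^\eq}{\sum_{i\geq 2}C_i\sum_{j\geq 2}p_{ij}(t)},
\]
and plugging in the Cauchy-Schwarz bound together with the identity $\sum_{i\geq 2}C_i/\sqrt{Q_iz^i}=\langle C,\sqrt{\mathbf{Q}}\rangle_H$ recovers the announced $R^\init e^{-\lambda t}$ bound.

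For the \emph{super-critical case}, this coupling fails because $\Pi^\eq$ is no longer summable. I instead decompose $C_i(t)=C_i^\init(t)+C_i^{\mathrm{new}}(t)$ between the contributions of the $N^\init$ initial clusters and of those activated on $[0,t]$. By the marked-Poisson structure of the arrival times together with the conditional independence of the post-activation trajectories, the family $(C_i^{\mathrm{new}}(t))_{i\geq 2}$ is an independent collection of Poisson variables with means $\mu_i(t)=a_1z^2\int_0^t p_{2i}(u)\,du$. A direct computation using the detailed-balance identity $a_iQ_i=b_{i+1}Q_{i+1}$ and the Green's function of the transient birth-death chain identifies $f_i$ of \eqref{eq:def-fi} with $a_1z^2\int_0^\infty p_{2i}(u)\,du$, so that the finite marginal of $\Pi^\stat$ on $\{2,\ldots,n\}$ is exactly a product of Poissons with parameters $(f_i)_{2\leq i\leq n}$. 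The triangle inequality then splits the total variation distance into a relaxation term, controlled by $\espin{\Pi^\init}{\sum_{i=2}^n C_i^\init(t)}\leq K_n e^{-\lambda t}\espin{\Pi^\init}{\langle C,\sqrt{\mathbf{Q}}\rangle_H}$ exactly as in the sub-critical case, and a Poisson-mean mismatch $\sum_{i=2}^n|\mu_i(t)-f_i|$; the latter is handled by recognising that $\boldsymbol{\mu}(t)-\mathbf{f}=-e^{tQ}\mathbf{f}$ solves the one-cluster semigroup equation, whence \eqref{eq:cv-transitions} gives $\|\boldsymbol{\mu}(t)-\mathbf{f}\|_H\leq e^{-\lambda t}\|\mathbf{f}\|_H$.

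I expect the main obstacles to lie in the super-critical case: first, rigorously identifying $f_i$ with $a_1z^2\int_0^\infty p_{2i}(u)\,du$ requires careful handling of the Green's function of the transient chain; second, making the semigroup identity $\boldsymbol{\mu}(t)-\mathbf{f}=-e^{tQ}\mathbf{f}$ rigorous despite $\mathbf{f}$ being only formally in the domain of $Q^{-1}$; and third, justifying that the marked-Poisson identification of $(C_i^{\mathrm{new}}(t))_{i\geq 2}$ as independent Poissons survives the restriction to the finite window $\{2,\ldots,n\}$, which follows by Poisson thinning but deserves an explicit verification.
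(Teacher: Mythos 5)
Your proposal is correct and follows essentially the same strategy as the paper: a particle-level coupling controlled by the one-cluster decay \eqref{eq:cv-transitions}, together with the Poisson-product structure of the $\delta_{\mathbf 0}$-started law and Kreer's relaxation estimate $\|\mathbf c(t)-\mathbf f\|_H\leq \|\mathbf f\|_H e^{-\lambda t}$ in the super-critical case. The only stylistic difference is that you couple the $\Pi^\init$-chain directly to a $\Pi^\eq$-chain (sub-critical case), whereas the paper couples each separately to the $\delta_{\mathbf 0}$-chain and then applies the triangle inequality, which avoids having to align the indices of the two random sets of initial particles — worth keeping in mind when you make the coupling precise. Your identification $f_i=a_1z^2\int_0^\infty p_{2i}(u)\,du$ and the Duhamel identity $\mathbf c(t)-\mathbf f=-e^{tA}\mathbf f$ are exactly the content of Kreer's Theorem~III, and the Poisson-marginal description of the newly activated clusters is Kingman's result on Markov population processes; the paper simply cites both rather than re-deriving them, and passes from the Poisson-mean mismatch to total variation via Ruzankin's lemma $\|\mathcal P(a)-\mathcal P(b)\|\leq|a-b|$, which you should invoke at the last step of your super-critical argument.
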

Not least, remark that $\espin{\Pi^\eq}{\Brak{C}{\sqrt{\mathbf Q}}_H} = \sum_{i=2}^\infty \sqrt{Q_iz^i} <\infty$ for $z<z_s$ and that $\mathbf f\in H$ for $z>z_s$ (see \cite{Kreer1993}). In the sequel, $K_n$ and $K$ always refer to the constants given above. The proof is based on a coupling (described below) to a distribution starting from $\mathbf 0$, so that the control of the initial particles in $\mathbf C(0)$ are a key point.

\begin{lemma}\label{lem:large-time-init-cluster}
  Under the hypothesis of Theorem \ref{thm:limit-distribution}. Let the collection of processes $X_1,X_2,\ldots$ being a particle description of the BD process $\mathbf C$. We have, for each $n\geq 2$, 
 \[\probin{\Pi^\init}{\forall k\in [\![ 1,N^\init ]\!]\,,\ X_k(t)\notin [\![2,n]\!]} \geq 1 -  K_n   \espin{\Pi^\init}{\langle  C,\sqrt{\mathbf Q}\rangle_H} e^{-\lambda t}\,.\] 
 In particular, for the sub-critical case,
 \[\probin{\Pi^\init}{\forall k\in [\![ 1,N^\init ]\!]\,,\ X_k(t)=1} \geq 1 -  K   \espin{\Pi^\init}{\langle C,\sqrt{\mathbf Q}\rangle_H} e^{-\lambda t}\,.\]
\end{lemma}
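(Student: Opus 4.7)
My plan is a union bound over the initially active particles, exploiting that, by the construction \eqref{eq:dif-eq-Xi}, for each $k\in[\![1,N^\init]\!]$ the process $X_k$ is a copy of the birth-death chain $X$ of Section~\ref{sec:chainX} starting from $X_k(0)$ (since $T_{k-N^\init}=0$ by convention), and distinct $X_k$'s are independent conditionally on their initial values, being driven by the independent Poisson measures $N_k$. Writing $A_k = \{X_k(t)\in[\![2,n]\!]\}$, subadditivity and conditioning give
\begin{equation*}
\probin{\Pi^\init}{\bigcup_{k=1}^{N^\init} A_k} \leq \espin{\Pi^\init}{\sum_{k=1}^{N^\init} \probc{X_k(t)\in[\![2,n]\!]}{X_k(0)}},
\end{equation*}
and passing to the complementary event produces the stated lower bound.

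Next, I would control each conditional probability via a Cauchy-Schwarz inequality: for a fixed initial state $i\geq 2$, writing $\sum_{j=2}^n p_{ij}(t) = \sum_{j=2}^n (p_{ij}(t)/\sqrt{Q_jz^j})\cdot \sqrt{Q_jz^j}$ yields the product of $\sqrt{\sum_{j=2}^n Q_jz^j} = K_n$ with $\sqrt{\sum_{j=2}^n p_{ij}(t)^2/(Q_jz^j)} \leq \|(p_{ij}(t))_{j\geq 2}\|_H$. The spectral estimate \eqref{eq:cv-transitions} controls the latter by $e^{-\lambda t}/\sqrt{Q_iz^i}$, so that $\sum_{j=2}^n p_{ij}(t) \leq K_n e^{-\lambda t}/\sqrt{Q_iz^i}$.

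The final step assembles the pieces. By the labeling identity \eqref{eq:def-init-X_k}, the random sum $\sum_{k=1}^{N^\init} 1/\sqrt{Q_{X_k(0)}z^{X_k(0)}}$ reorganizes as $\sum_{i\geq 2} C_i(0)/\sqrt{Q_iz^i} = \Brak{C(0)}{\sqrt{\mathbf Q}}_H$, which has finite expectation under \eqref{hyp:esp-Cin}, yielding the first inequality. The sub-critical claim follows by taking $n=\infty$: since $\sum_{j\geq 2} Q_jz^j < \infty$ for $z<z_s$, the constant $K$ is finite and the Cauchy-Schwarz argument extends verbatim to the full chain, with the event $\{X_k(t)\geq 2\}$ (equivalently $\{X_k(t)=1\}^c$ since $X_k$ takes values in $\Nb_1$) replacing $\{X_k(t)\in[\![2,n]\!]\}$. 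I do not foresee any genuine analytic obstacle; the only subtlety is notational bookkeeping to align the Cauchy-Schwarz weights with $K_n$ and with $\Brak{\cdot}{\sqrt{\mathbf Q}}_H$ via $c_i^\eq = Q_iz^i$.
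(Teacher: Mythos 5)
Your proof is correct and follows essentially the same route as the paper: same Cauchy--Schwarz step via \eqref{eq:cv-transitions} to bound $\probc{X(t)\in[\![2,n]\!]}{X(0)=i} \leq K_n e^{-\lambda t}/\sqrt{Q_i z^i}$, and same reorganization of the sum into $\Brak{C(0)}{\sqrt{\mathbf Q}}_H$ via the labeling identity. The one (minor) variation is that you invoke a union bound $\prob{\bigcup A_k}\leq\sum\prob{A_k}$ directly, whereas the paper factorizes $\probin{C}{\bigcap A_k^c}=\prod_k(1-\prob{A_k})$ using the conditional independence of the $X_k$'s and then applies $\prod_k(1-x_k\wedge 1)\geq 1-\sum_k x_k$; both yield the identical intermediate estimate $\geq 1 - K_n e^{-\lambda t}\sum_k 1/\sqrt{Q_{i_k}z^{i_k}}$. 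Your choice is marginally simpler here since it does not require independence and dispenses with the $\wedge 1$ cap, but the paper's product decomposition is reused in a sharper, non-interchangeable way in Lemma~\ref{lem:At} (where a genuine equality of conditional probabilities is needed), which explains why the authors adopt it uniformly.
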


\begin{proof}
    Fix $n\geq 2$. Let $C\in \Ec$ deterministic, define $N=\sum_{i=2}^\infty C_i$ and $(i_1,\ldots,i_N) \in \Nb_2^N$ given by the \emph{labelling function}, \textit{e.g.} $\mathbf C(0)=C$ and $X_1(0)=i_1,\ldots X_N(0)=i_N$ satisfy relation \eqref{eq:def-init-X_k}. Since each processes $X_1,\ldots X_N$ are independent copy of the chain $X$ given in Sec. \ref{sec:chainX}, conditionally on their initial condition, we have
    \begin{equation} \label{eq:inter-abs-2n-1}
     \probin{C}{\forall k\in [\![ 1,N ]\!]\,,\ X_k(t)\notin [\![2,n]\!]} 
     = \prod_{k=1}^N \probc{X(t)\notin [\![2,n]\!]}{X(0)=i_k}
    \end{equation}
    for all $t\geq 0$. Thanks to Cauchy–Schwarz inequality and \eqref{eq:cv-transitions},
    \begin{equation*}\probc{X(t)\in [\![2,n]\!]}{X(0)=i} = \sum_{j=2}^n  p_{ij}(t) \leq \left(\frac{1}{\sqrt{Q_iz^i}}\right)K_n e^{-\lambda t}\wedge 1\,.
    \end{equation*}
    Hence, with \eqref{eq:inter-abs-2n-1}, we have
    \begin{equation*}\probin{C}{\forall k\in [\![ 1,N ]\!]\,,\ X_k(t)\notin [\![2,n]\!]} 
    \geq 1  -  K_ne^{-\lambda t} \sum_{k=1}^N \frac{1}{\sqrt{Q_{i_k} z^{i_k}}}\,,
    \end{equation*}
	remarking that $\prod_{i=1}^N (1-x_i\wedge 1) \geq 1 - \sum_{i=1}^N x_i$ for any non-negatives $x_1,\ldots, x_N$. 
	Finally, we conclude that 
	\begin{multline*}
	 \probin{\Pi^\init}{\forall k\in [\![ 1,N^\init ]\!]\,,\ X_k(t)\notin [\![2,n]\!]} 
     \geq 1 -  K_n e^{-\lambda t}  \sum_{C\in\Ec} \sum_{k=1}^N \frac{1}{\sqrt{Q_{i_k} z^{i_k}}} \Pi^\init(C)\\
    = 1 -  K_ne^{-\lambda t}\espin{\Pi^\init}{\sum_{i=2}^\infty \frac{\#\set{k\in [\![ 1,N^\init ]\!]}{X_k(0) = i}}{\sqrt{Q_iz^i}}}\,,
    \end{multline*}
    and the proof ends.
\end{proof}
We now show, by a coupling argument, that any solution satisfying \eqref{hyp:esp-Cin} is in total variation exponentially close to the solution that starts with no cluster, namely the deterministic initial condition at $\mathbf 0$.
\begin{lemma}\label{lem:coupling-large-time}
Under the hypothesis of Theorem \ref{thm:limit-distribution}. For all $t\geq 0$, we have
\begin{itemize}[wide]
\item In the subcritical case ($z<z_s$),
 \begin{equation*}
  \|\probin{\Pi^\init}{\mathbf C(t) \in \cdot} - \probin{\mathbf 0}{\mathbf C(t)\in\cdot} \|\leq  K \espin{\Pi^\init}{\langle  C,\sqrt{\mathbf Q}\rangle_H} e^{-\lambda t}\,;
 \end{equation*}
 \item In the super-critical case ($z>z_s$), for all $n\geq 2$,
 \begin{equation*}
  \|\probin{\Pi^\init}{ (C_2(t),\ldots,C_n(t)) \in \cdot} - \probin{\mathbf 0}{(C_2(t),\ldots,C_n(t))\in\cdot} \| 
  \leq  K_n  \espin{\Pi^\init}{\langle C,\sqrt{\mathbf Q}\rangle_H} e^{-\lambda t}
 \end{equation*}
\end{itemize}
\end{lemma}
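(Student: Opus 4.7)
My plan is to run a coupling argument directly inside the particle description of Sec.~\ref{sec:model}. Build an auxiliary probability space carrying independent Poisson point measures $\tilde N_1, \tilde N_2,\ldots$ and $M_1, M_2,\ldots$, all with intensity $ds\,du$, an independent sequence of activation times $T_1, T_2,\ldots$ with the prescribed exponential law of parameter $a_1z^2$, and an $\Ec$-valued random variable of law $\Pi^\init$ represented through the labelling function as $(i_1,\ldots,i_{N^\init})$ with $N^\init = \sum_{i\geq 2} C_i(0)$. On this space, construct two BD processes simultaneously via \eqref{eq:dif-eq-Xi}: a process $\mathbf C^{(\init)}$ started from $\Pi^\init$ whose initial particles $\tilde X_1,\ldots,\tilde X_{N^\init}$ are driven by $\tilde N_1,\ldots,\tilde N_{N^\init}$ from states $i_1,\ldots,i_{N^\init}$ and whose later particles are driven by $M_1, M_2,\ldots$ and activate at $T_1, T_2,\ldots$ from state $2$; and a process $\mathbf C^{(0)}$ started from $\mathbf 0$ using the \emph{same} $M_k$'s and $T_k$'s for its particles. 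Both are genuine BD processes by the pathwise construction, and by design the particles born after time $0$ are literally identical in the two processes.

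Decomposing counts according to whether a particle is initial or post-$0$ gives, for every $i\geq 2$,
\[
C_i^{(\init)}(t) = C_i^{(0)}(t) + \#\set{k\in[\![1,N^\init]\!]}{\tilde X_k(t)=i}\,.
\]
Hence the truncated vectors $(C_2,\ldots,C_n)$ of the two processes coincide on the event $A_n(t) = \{\forall k\in[\![1,N^\init]\!],\ \tilde X_k(t)\notin [\![2,n]\!]\}$, and the full states coincide on $A(t) = \{\forall k\in[\![1,N^\init]\!],\ \tilde X_k(t)=1\}$. The coupling inequality for total variation then yields, in the super-critical case,
\[
\|\probin{\Pi^\init}{(C_2(t),\ldots,C_n(t)) \in \cdot} - \probin{\mathbf 0}{(C_2(t),\ldots,C_n(t))\in\cdot}\| \leq \prob{A_n(t)^c}\,,
\]
and the analogous bound with $\prob{A(t)^c}$ in the sub-critical case. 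Both right-hand sides are exactly the quantities estimated in Lemma~\ref{lem:large-time-init-cluster}, which closes the proof.

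The statement is therefore essentially a translation of Lemma~\ref{lem:large-time-init-cluster} through the coupling inequality, so no genuine difficulty is expected. The only point that needs care is to organise the Poisson-measure and activation-time data so that the post-$0$ particles in the two processes agree as stochastic processes; this is transparent in the particle description, because the Poisson measure and activation time driving any given post-$0$ particle are, by construction, independent of the initial data $\mathbf C(0)$, so reassigning Poisson measures indices between the two constructions does not alter the law of either process.
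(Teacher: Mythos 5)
Your proof is correct and uses essentially the same coupling as the paper: both make the post-time-$0$ particles literally identical in the two processes so that the states (or truncated states) agree on the event that all $N^\init$ initial particles have left $\{1\}^c$ (or $[\![2,n]\!]$), then invoke the total variation coupling inequality and Lemma~\ref{lem:large-time-init-cluster}. The only cosmetic difference is that you allocate a separate family of Poisson measures to the initial particles, whereas the paper realises the $\mathbf 0$-started process as $Y_k = X_{k+N^\init}$ on the original driving noise, which amounts to the same coupling.
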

\begin{proof}
Let the collection of processes $X_1,X_2,\ldots$ (resp. $Y_1, Y_2,\ldots$) being a \emph{particle description} of the BD process that starts from the initial distribution $\Pi^\init$ (resp. from $\delta_{\mathbf 0}$). We couple the processes $X_1,X_2,\ldots$ to the processes  $Y_1,Y_2,\ldots$ such that all new particle ``activates'' simultaneously and evolves with the same jumps. Namely, $Y_k(t)=X_{k+N^\init}(t)$ for all $k\geq 1$ and all $t\geq 0$, {where $N^\init$ is distributed according to $\Pi^\init$.}

In the sub-critical case the proof readily follows from Lemma \ref{lem:large-time-init-cluster} and the definition of the total variation since
\begin{equation*}
	\|\probin{\Pi^\init}{\mathbf C(t) \in \cdot} - \probin{\mathbf 0}{\mathbf C(t)\in\cdot} \|
	 \leq  1 - \probin{\Pi^\init}{\forall k\in[\![1,N^\init]\!], X_k(t)=1}
\end{equation*}
because all "active" clusters are equal whenever all initial clusters from $\Pi^\init$ have been absorbed. A very similar argument holds in the super-critical case.
\end{proof}

\begin{proof}[Proof of Theorem \ref{thm:limit-distribution}]
We consider first the sub-critical case. As said, condition \eqref{hyp:esp-Cin} is satisfied for $\Pi^\eq$ since $ \espin{\Pi^\eq}{\langle C,\sqrt{\mathbf Q}\rangle_H}<\infty$, thus Lemma  \ref{lem:coupling-large-time} applies for $\Pi^\eq$ as initial distribution.
Because the constructed BD process is regular and $\Pi^\eq$ is a stationary distribution {(\emph{i.e.} $\probin{\Pi^\eq}{\mathbf C(t)\in\cdot}=\Pi^\eq$)}, we deduce
\[\|\probin{\mathbf 0}{\mathbf C(t)\in\cdot} - \Pi^\eq\| \leq K  \espin{\Pi^\eq}{\langle C,\sqrt{\mathbf Q}\rangle_H} e^{-\lambda t}\,.  \]
Going back to any $\Pi^\init$ satisfying condition \eqref{hyp:esp-Cin}, applying Lemma \ref{lem:coupling-large-time} again and the triangular inequality yield the desired result.
 
\medskip

Consider now the super-critical case. $\Pi^\stat$ is a product of Poisson distribution $\mathcal P(f_i)$ on $\Nb_0$ with mean $f_i$. According to a classical result on Markov population processes, see e.g. \cite[Sec. 4]{Kingman1969}, the law $\probin{\mathbf 0}{\mathbf C(t)\in\cdot}$ is also a product of Poisson distribution $\mathcal P(c_i(t))$ on $\Nb_0$ with mean $c_i(t)$ such that $\mathbf c(t)=(c_2(t),c_3(t),\ldots)$ solves the deterministic (linear) Becker-Döring equations namely, $\mathbf c(t)=A \mathbf c(t)+a_1z^2 \mathbf e_2$ where $A=(q_{j,i})_{i,j\geq 2}$ the matrix with entries in \eqref{eq:marche_aleatoire_absorbe}, and initial condition $\mathbf c(0)=\mathbf 0$. Thanks to \cite[Theorem III]{Kreer1993}, we have
\begin{equation}\label{eq:Kreer_asymptot}
 \| \mathbf c(t) - \mathbf f\|_H \leq \|\mathbf f\|_H e^{-\lambda t} 
\end{equation}
and according to \cite[Lemma 1]{Ruzankin2004}, $\| \mathcal P(c_i(t)) - \mathcal P(f_i)\| \leq |c_i(t) - f_i|$. The latter, with independence of the marginals of $\Pi^\stat$ and of $\probin{\mathbf 0}{\mathbf C(t)\in\cdot}$, estimate \eqref{eq:Kreer_asymptot} and Cauchy-Schwarz inequality, entail
\[ \|\probin{\mathbf 0}{ (C_2(t),\ldots,C_n(t)) \in \cdot} - \Pi^\stat(\cdot\times\prod_{k=n+1}^\infty \Nb_0) \| \leq \sum_{i=2}^n |d_i(t) - f_i| \leq K_n \|\mathbf f\|_H e^{-\lambda t} \,.\]
We conclude again by Lemma \ref{lem:coupling-large-time} and the triangular inequality.
\end{proof}

\section{A quasi-stationary distribution}  \label{sec:qsd}

Let $\Ec_n=\set{C\in \Ec}{ C_i = 0\,, i\geq n+1}$. We define the first exit time from $\Ec_n$, 
\begin{equation}\label{eq:def-tau_n}
  \tau_n = \inf\set{t\geq 0}{\mathbf C(t)\notin \Ec_n}\,.
 \end{equation}
 Remark that $\probin{\Pi^\init}{\tau_n>t}>0$ for all times $t>0$ and for any $\Pi^\init$ supported on $\Ec_n$. We will give in the next section \ref{sec:taun} a tight lower bound on that probability in the super-critical case. In this section, we prove exponential ergodicity towards a unique QSD for the BD process conditioned to $\tau_n>t$. 
 It is remarkable that we have at hand an explicit QSD, given, for all $C\in\Ec_n$, by
\begin{equation}\label{QSD_poisson}
\Pi_n^\qsd (C)= \prod_{i=2}^{n} \frac{(f_i^n)^{C_i}}{C_i!}e^{-f_i^n}\,, \qquad \text{with } f_i^n(z) = J_{n} Q_iz^i \sum_{k=i}^{n} \frac{1}{a_kQ_kz^{k+1}}
\end{equation}
for $i=2,\ldots,n$ and $J_n$  defined in \eqref{eq:def-Jn}.

\begin{proposition}\label{prop:QSD}
Under assumption \eqref{eq:ass_nonexplosive_birthdeath}. The distribution $\Pi_n^\qsd$ is a quasi-stationary distribution for the BD process conditioned to stay on $\Ec_n$ namely, 
 \[\probcin{\Pi_n^\qsd}{\mathbf C(t)\in \cdot}{t<\tau_n} = \Pi_n^\qsd\]
and moreover 
 \[ \probin{\Pi_n^\qsd}{t<\tau_n} = \exp\left(-J_n t\right)\,.\]
\end{proposition}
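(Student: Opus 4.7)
My plan is to reduce both claims simultaneously to the Poisson thinning structure of the particle description, in the spirit of the Kingman-type argument already used in the proof of Theorem \ref{thm:limit-distribution}. The initial configuration under $\Pi_n^\qsd$ is by construction a Poisson point process on $[\![2,n]\!]$ with intensities $(f_j^n)_{j=2,\ldots,n}$; the subsequent activation of new clusters at state $2$ is an independent Poisson process in time of rate $a_1z^2$; and, once activated, each cluster evolves as an independent copy of the chain $X$ of Section~\ref{sec:chainX}. For any single particle born at time $s\in[0,t]$ at state $i$, the probability of being at state $j\in[\![1,n]\!]$ at time $t$ without ever having visited $n+1$ is $p^n_{ij}(t-s)$, and the ``escape'' probability is $p^n_{i(n+1)}(t-s)$, with $p^n$ the transition function of the killed chain $Y$.

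Applying the colouring theorem of Poisson processes both to the initial spatial PPP on $[\![2,n]\!]$ and to the temporal PPP of activations, the variables $(C_2(t),\ldots,C_n(t))$ together with the total number $N_\star(t)$ of clusters having escaped by time $t$ form a vector of mutually independent Poisson random variables, with respective parameters
\[
m_j(t) = \sum_{i=2}^n f_i^n\, p^n_{ij}(t) + \int_0^t a_1 z^2\, p^n_{2j}(t-s)\,ds, \qquad j\in[\![2,n]\!],
\]
and
\[
M(t) = \sum_{i=2}^n f_i^n\, p^n_{i(n+1)}(t) + \int_0^t a_1 z^2\, p^n_{2(n+1)}(t-s)\,ds.
\]
The event $\{\tau_n>t\}$ coincides with $\{N_\star(t)=0\}$; by independence it has probability $e^{-M(t)}$ and, conditionally on it, the joint law of $(C_2(t),\ldots,C_n(t))$ is still the product of Poisson distributions with parameters $m_j(t)$.

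The proof is thus reduced to the two identities $m_j(t)=f_j^n$ for $j\in[\![2,n]\!]$ and $M(t)=J_n t$. Differentiating in $t$ via Kolmogorov's forward equation for $Y$ gives $\dot m_j(t) = \sum_{k=2}^n q^n_{kj} m_k(t) + a_1z^2\delta_{2j}$ for $j\in[\![2,n]\!]$ and $\dot M(t) = a_n z\, m_n(t)$, so the result follows as soon as $\mathbf f^n$ is a stationary solution of the first linear system. This last point is an elementary algebraic verification based on the relation $a_iQ_i=b_{i+1}Q_{i+1}$ and the telescoping $S_j-S_{j+1}=1/(a_jQ_jz^{j+1})$, where $S_j=\sum_{k=j}^n 1/(a_kQ_kz^{k+1})$; the boundary equation at $j=2$ unwinds to $J_n=a_1z^2/(1+a_1z^2 S_2)$, which is precisely the defining identity of $J_n$ in \eqref{eq:def-Jn}. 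Once $m_n(t)\equiv f_n^n=J_n/(a_nz)$ is established, the second equation gives $\dot M(t)\equiv J_n$, hence $M(t)=J_n t$. Both assertions of the proposition then follow at once. The only genuinely non-routine step is the stationary identity for $\mathbf f^n$, but this is a short direct computation; everything else is a packaging of Poisson marking together with the forward equation for the killed chain $Y$.
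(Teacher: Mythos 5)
Your argument is correct, but it takes a genuinely different route from the paper's. The paper proves the proposition by a direct generator computation: it writes down the dual operator $\Ac_n^*$ of the sub-Markovian generator $\Ac_n$, verifies by hand (using exactly the same algebraic identity $a_i z f_i^n - \mathbf 1_{i<n} b_{i+1} f_{i+1}^n = J_n$ that you invoke through the telescoping of $S_j$) that $\Ac_n^*\Pi_n^\qsd = -J_n \Pi_n^\qsd$, and then cites the spectral criterion of Collet--Mart\'inez--San~Mart\'in to conclude. You instead exploit the particle description: since $\Pi_n^\qsd$ is a product of Poissons, the initial clusters form a spatial Poisson point process, activations form a temporal one, and marking each point by its (killed) trajectory and then colouring by the state at time $t$ shows that $(C_2(t),\ldots,C_n(t))$ and the escaped count $N_\star(t)$ are mutually independent Poissons; both claims of the proposition then reduce to the stationarity identities $m_j\equiv f_j^n$ and $\dot M\equiv J_n$, which you verify via the forward equation for the killed chain. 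The paper's route is shorter and leans on an external QSD criterion, whereas yours is more constructive and probabilistically illuminating: it \emph{explains} why conditioning on $\{\tau_n>t\}$ leaves the marginal law unchanged (independence of the escape count from the interior occupation numbers) and why the exit time is exactly exponential, and it dovetails with the Kingman population-process argument already used in the proof of Theorem~\ref{thm:limit-distribution}. Both proofs ultimately hinge on the same one-line flux identity for $\mathbf f^n$, so the amount of algebra is comparable; what you gain is a more transparent picture of the mechanism, at the modest cost of having to justify carefully the Poisson colouring of the space-time point process and the identification $\{\tau_n>t\}=\{N_\star(t)=0\}$ (which you do correctly, since $Y$ is absorbed at $n+1$).
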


\begin{proof}
Recall assumption \eqref{eq:ass_nonexplosive_birthdeath} ensures the BD process is regular. Fix $n\geq 2$. Let the semi-group $P_t^n\psi(C)=\espin{C}{\psi(\mathbf{C}(t))\mathbf 1_{t<\tau_n}}$ for $t\geq 0$ (\emph{i.e.} $\mathbf C(0)$ is distributed according to $\delta_C$), whose generator is
 \begin{equation*}
 \Ac_n\psi(C) = \sum_{i= 1}^{n-1} \Big( a_i z C_i  [\psi(C + \Delta_i)-\psi(c)] + b_{i+1}  C_{i+1}[\psi(C- \Delta_i)-\psi(C)] \Big)
 -a_{n}zC_{n}\psi(C)
\end{equation*}
for all $C\in\Ec_n$ (recall $C_1=z$) and bounded function $\psi$ on $\Ec_n$. Denote by  $\Ac_n^*$ the dual operator for the generator $\Ac_n$. Some calculations show that the distribution \eqref{QSD_poisson} satisfies, for any $C\in \Ec_n$,
\begin{equation*}
\Ac_n^*\Pi_n^\qsd (C)= \Pi_n^\qsd (C)\left\{ b_2f_2^n -a_1z^2 + \sum_{i=2}^{n}\frac{C_i}{f_i^n}\left( a_{i-1}zf_{i-1}^n-(a_iz + b_i)f_i^n + \mathbf 1_{i < n} b_{i+1}f_{i+1}^n\right)\right\}
\end{equation*}
with the convention $f_1^n=z$. Since the $f_i^n$ given by \eqref{QSD_poisson} verifies $a_izf_i^n-\mathbf 1_{i < n} b_{i+1}f_{i+1}^n=J_n$ for all $i\in [\![1,n ]\!]$, all terms but the first cancel in the above expression, so that we obtain $\Ac_n^* \Pi_n^\qsd = - J_n \Pi_n^\qsd$ which is the classical spectral criteria of QSD, noticing that $J_n\leq a_1z^2$, see \cite[Thm 4.4]{Collet2013}. 
\end{proof}

The next theorem shows the QSD is a quasi-limiting distribution for a wide range of initial distribution supported on $\Ec_n$, with an exponential rate of convergence and an explicit (non-uniform) pre-factor.

\begin{theorem}\label{thm:ergodicity-qsd}
Under assumption \eqref{eq:ass_nonexplosive_birthdeath}.  Let $\Pi^\init$ a probability distribution on $\Ec_n$ such that $\espin{\Pi^\init}{\sum_{i=2}^\infty C_i}<\infty$. We have for all $t\geq 0$, 
\begin{equation*}
\|\probcin{\Pi^\init}{\mathbf C(t) \in \cdot}{\tau_n>t} - \Pi_n^\qsd\| \leq  K_n \left(\frac{H_n^\init}{\prob{\tau_n>t}} + e^{J_n t} H_n^\qsd \right)e^{-\gamma_n t}\,,
\end{equation*} 
where $\tau_n$ is defined in \eqref{eq:def-tau_n}, $J_n$ in \eqref{eq:def-Jn}, $K_n$ in Theorem~\eqref{thm:limit-distribution}, $\gamma_n$ in \eqref{eq:estim-pij},
\begin{equation*}
H_n^\init = \sum_{i=2}^n \sqrt{Q_i z^i}\frac{ \espin{\Pi^\init}{C_i} }{  f_i^n  } \quad \text{and} \quad H_n^\qsd  = \sum_{i=2}^n \sqrt{Q_i z^i} \,.
\end{equation*}
\end{theorem}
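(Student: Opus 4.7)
The plan is to compare $\probcin{\Pi^\init}{\mathbf C(t)\in\cdot}{\tau_n>t}$ with $\probcin{\mathbf 0}{\mathbf C(t)\in\cdot}{\tau_n>t}$ via a conditional coupling (giving the first term of the bound), then compare $\probcin{\mathbf 0}{\mathbf C(t)\in\cdot}{\tau_n>t}$ with $\Pi_n^\qsd$ by re-applying the \emph{same} coupling with $\Pi_n^\qsd$ in the role of initial distribution, exploiting the quasi-stationarity identity $\probin{\Pi_n^\qsd}{\tau_n>t}=e^{-J_n t}$ from Proposition~\ref{prop:QSD} (giving the second term). Triangle inequality then yields the theorem.

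The key technical step is a conditional analogue of Lemma~\ref{lem:coupling-large-time}: for every probability $\mu$ on $\Ec_n$ with $\espin{\mu}{\sum_i C_i}<\infty$,
\[
\|\probcin{\mu}{\mathbf C(t)\in\cdot}{\tau_n>t}-\probcin{\mathbf 0}{\mathbf C(t)\in\cdot}{\tau_n>t}\|\leq \frac{K_n H_n^{\mu}}{\probin{\mu}{\tau_n>t}}e^{-\gamma_n t},
\]
where $H_n^\mu$ is defined exactly as $H_n^\init$ with $\Pi^\init$ replaced by $\mu$. I couple the $\mu$-initial process with the $\mathbf 0$-initial one by sharing all activated particles (as in Lemma~\ref{lem:coupling-large-time}). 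Let $F$ be the event ``no initial particle reaches $n+1$ on $[0,t]$'', $G$ the event ``no activated particle reaches $n+1$ on $[0,t]$'', and $E\subset F$ the event $F$ supplemented by ``every initial particle is at state $1$ at time $t$''. Under the coupling, for the $\mu$-initial process $\{\tau_n>t\}=F\cap G$, for the $\mathbf 0$-initial process $\{\tau_n>t\}=G$, and on $E\cap G$ the initial particles contribute zero so both processes coincide at time $t$. Moreover $F$ and $E$ depend only on the initial particles while $G$ and the $\mathbf 0$-initial process at time $t$ depend only on the activated ones, hence they are independent; in particular $\probin{\mu}{\tau_n>t}=\probin{\mu}{F}\probin{\mathbf 0}{\tau_n>t}$. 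A direct event decomposition then yields the bound $(\probin{\mu}{F}-\probin{\mu}{E})/\probin{\mu}{F}$ on the total variation.

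To estimate this ratio, condition on $\mathbf C(0)=C\in\Ec_n$: by particle independence $\probc{F}{\mathbf C(0)=C}=\prod_k\probc{T_n>t}{X(0)=i_k}$ and $\probc{E}{\mathbf C(0)=C}=\prod_k p_{i_k1}^n(t)$, with $i_k$ given by the labelling function. The key estimate \eqref{eq:estime_absorbtion_kreer_n} reads $p_{i_k1}^n(t)/\probc{T_n>t}{X(0)=i_k}\geq 1-M_{i_k,n}e^{-\gamma_n t}$; combined with $1-\prod_k(1-x_k)\leq\sum_k x_k$, it yields $\probc{F}{\mathbf C(0)=C}-\probc{E}{\mathbf C(0)=C}\leq \probc{F}{\mathbf C(0)=C}\,e^{-\gamma_n t}\sum_k M_{i_k,n}$. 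Integrating in $\mu$ and using the identity $\espin{\mu}{\sum_k M_{i_k,n}}=K_n H_n^\mu$ (which follows from $f_i^n=Q_iz^i\lim_{s\to\infty}p_{i1}^n(s)$ and the explicit form of $M_{i,n}$ in Section~\ref{sec:chainX}), then dividing by $\probin{\mu}{F}\geq\probin{\mu}{\tau_n>t}$, yields the key step. The theorem follows by triangle inequality: the first term is the key step with $\mu=\Pi^\init$; the second follows by applying it with $\mu=\Pi_n^\qsd$, noting that $\Pi_n^\qsd=\probcin{\Pi_n^\qsd}{\mathbf C(t)\in\cdot}{\tau_n>t}$ and $\probin{\Pi_n^\qsd}{\tau_n>t}=e^{-J_n t}$ by Proposition~\ref{prop:QSD}, and that $H_n^{\Pi_n^\qsd}=H_n^\qsd$ since $\espin{\Pi_n^\qsd}{C_i}=f_i^n$; these ingredients produce precisely $K_n H_n^\qsd e^{(J_n-\gamma_n)t}$. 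The main obstacle is carrying out the conditional event decomposition carefully enough to get the clean ratio $(\probin{\mu}{F}-\probin{\mu}{E})/\probin{\mu}{F}$; the rest is algebra that recycles ingredients from Sections~\ref{sec:chainX} and~\ref{sec:long-time}.
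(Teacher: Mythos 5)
Your proof is correct and follows essentially the same route as the paper's: the coupling that shares all activated particles, the bound on the conditional total-variation distance by the conditional probability that some initial cluster has not yet been absorbed (your ratio $(\probin{\mu}{F}-\probin{\mu}{E})/\probin{\mu}{F}$ is exactly the paper's $\probcin{\mu}{A_t^c}{\tau_n>t}$ after exploiting independence from $G$), the estimate \eqref{eq:estime_absorbtion_kreer_n}, and finally the application to $\mu=\Pi_n^\qsd$ together with Proposition~\ref{prop:QSD} and the triangle inequality. Your explicit $F,E,G$ event decomposition is just a slightly more spelled-out version of what the paper packages into Lemma~\ref{lem:At}.
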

It is clear that $H_n^\init$ is finite because $\espin{\Pi^\init}{\sum_{i=2}^\infty C_i}$ is. The proof of Theorem \ref{thm:ergodicity-qsd} is similar to the proof of Theorem \ref{thm:limit-distribution} and consists in a coupling argument together with a control of the initial clusters in $\Pi^\init$. We start by the later, which is the analogous of Lemma \ref{lem:large-time-init-cluster}.

\begin{lemma}\label{lem:At}
 Under the hypothesis of Theorem \ref{thm:ergodicity-qsd}. Let the collection of processes $X_1,X_2,\ldots$ being a particle description of the BD process $\mathbf C$.	We have 
	\begin{equation*}
	\probcin{\Pi^\init}{\forall k\in[\![1,N^\init]\!]\,,\ X_k(t)=1}{\tau_n>t}	\geq 1-  e^{-\gamma_n t}\frac{K_nH_n^\init}{\probin{\Pi^\init}{t<\tau_n}}\,.
	\end{equation*}
\end{lemma}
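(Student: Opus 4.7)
The plan is to adapt the argument of Lemma~\ref{lem:large-time-init-cluster} by carrying out the same particle-wise control, while keeping track of the conditioning on $\{\tau_n>t\}$. First, I would write
\[
1 - \probcin{\Pi^\init}{\forall k\in[\![1,N^\init]\!]\,,\ X_k(t)=1}{\tau_n>t} = \frac{\probin{\Pi^\init}{\exists k\in[\![1,N^\init]\!]\,,\ X_k(t)\neq 1\,,\ \tau_n>t}}{\probin{\Pi^\init}{\tau_n>t}},
\]
and observe that on the event $\{\tau_n>t\}$ every active cluster $X_k$ has remained in $[\![1,n]\!]$ up to time $t$, so $\{X_k(t)\neq 1\}$ reduces to $\{X_k(t)\in[\![2,n]\!]\}$. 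Denoting by $T_n^{(k)}$ the exit time of the $k$-th cluster from $[\![1,n]\!]$ (in the sense of \eqref{eq:def_Tn} applied to $X_k$), we also have $\{\tau_n>t\}\subseteq\{T_n^{(k)}>t\}$ for every $k\in[\![1,N^\init]\!]$, which lets us dispense with the global conditioning inside the union bound.

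Next, I would apply a union bound together with a conditioning on the initial sizes. Since the $X_k$'s are mutually independent copies of the chain $X$ of Sec.~\ref{sec:chainX} given their starting values, this yields
\[
\probin{\Pi^\init}{\exists k\in[\![1,N^\init]\!]\,,\ X_k(t)\neq 1\,,\ \tau_n>t} \leq \espin{\Pi^\init}{\sum_{k=1}^{N^\init}\sum_{j=2}^n p^n_{X_k(0),j}(t)}.
\]
For each $i\in[\![2,n]\!]$ I would then invoke the Kreer-type estimate of Sec.~\ref{sec:chainX}, giving $\sum_{j=2}^n p^n_{i,j}(t)\leq M_{i,n}\,e^{-\gamma_n t}$, and unpack the definition of $M_{i,n}$. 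A direct calculation using \eqref{QSD_poisson} produces the identity $M_{i,n}=K_n\sqrt{Q_iz^i}/f_i^n$, since $1/(J_n\sum_{k=i}^n 1/(a_kQ_kz^{k+1})) = Q_iz^i/f_i^n$ and $(\sum_{j=2}^n Q_jz^j/Q_iz^i)^{1/2} = K_n/\sqrt{Q_iz^i}$.

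Substituting, the sum over $k$ regroups by cluster size as $\sum_{i=2}^n C_i(0)\,\sqrt{Q_iz^i}/f_i^n$, and taking expectation under $\Pi^\init$ delivers exactly $K_n H_n^\init e^{-\gamma_n t}$ as an upper bound on the numerator. Dividing by $\probin{\Pi^\init}{\tau_n>t}$ yields the announced inequality. The only mildly delicate step is the algebraic identification $M_{i,n}=K_n\sqrt{Q_iz^i}/f_i^n$: it is precisely this rewriting that makes the quantity $H_n^\init$ emerge naturally, and that, when specialised to $\Pi^\init=\Pi_n^\qsd$, collapses to the uniform constant $H_n^\qsd$ needed downstream in Theorem~\ref{thm:ergodicity-qsd}. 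Everything else is a direct transcription of the proof of Lemma~\ref{lem:large-time-init-cluster}, with $\sum_{j=2}^n p^n_{i,j}(t)$ playing the role that $\probc{X(t)\in[\![2,n]\!]}{X(0)=i}$ played there.
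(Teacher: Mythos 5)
Your proof is correct and reaches the same bound via a genuinely different combinatorial route. The paper works \emph{multiplicatively}: conditioning on a deterministic $C$, it observes that the event $A_t=\{\forall k, X_k(t)=1\}$ is independent of the post-time-$0$ arrivals, so that the global conditioning $\{\tau_n>t\}$ can be replaced by $\{\min(T_n^1,\ldots,T_n^{N})>t\}$, and then the conditional probability factorizes exactly as a product $\prod_k \probc{X(t)=1}{X(0)=i_k, T_n>t}$; each factor is bounded below via \eqref{eq:estime_absorbtion_kreer_n}, and the elementary inequality $\prod_k(1-x_k\wedge 1)\geq 1-\sum_k x_k$ converts the product into a sum. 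You work \emph{additively} instead: you bound the numerator of the complement $\probin{\Pi^\init}{A_t^c,\ \tau_n>t}$ by the union bound, discarding the global event $\{\tau_n>t\}$ in favour of the weaker per-particle events $\{T_n^{(k)}>t\}$ (the inclusion following from \eqref{eq:rel-tau-tau0}), and then using $\sum_{j=2}^n p^n_{ij}(t)\leq M_{i,n}e^{-\gamma_n t}$. This sidesteps the independence argument used to drop $\tau_n^0$ from the conditioning — a step the paper must carry out because it insists on the exact factorization — and it also avoids the $\prod(1-x_k\wedge 1)\geq 1-\sum x_k$ inequality, since the union bound linearizes directly. The algebraic heart of both arguments is the same: the identity $M_{i,n}=K_n\sqrt{Q_iz^i}/f_i^n$, which you derive cleanly from \eqref{QSD_poisson} and \eqref{eq:def-Jn}, and which is exactly how the paper produces $H_n^\init$. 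Your version is marginally simpler to audit; the paper's version produces the bound $\probcin{C}{A_t}{\tau_n>t}\geq 1-e^{-\gamma_n t}\sum_k M_{i_k,n}$ uniformly in the deterministic initial condition, which is a slightly stronger intermediate statement, though this extra precision is discarded when $\probin{C}{\tau_n>t}$ is bounded by $1$.
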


\begin{proof}
	We start by observing that the following relation holds true,
	\begin{equation}\label{eq:rel-tau-tau0}
	\tau_n = \min(\tau^0_n,T_n^1,\ldots,T^{N^\init}_n) \,,
	\end{equation}
	where $T^k_n=\inf\set{t>0}{X_k(t)\geq n+1}$ for $k=1,\ldots,N^\init$ and 
	\begin{equation}\label{def:tau0}
	\tau^0_n =  \inf\set{t\geq 0}{\exists k > N^\init\,, X_k(t)\geq n+1}\,.
	\end{equation}
    Let $\mathbf C(0)=C\in \Ec_n$ deterministic, define $N=\sum_{i=2}^\infty C_i$ and $(i_1,\ldots,i_N) \in [\![2,n]\!]^N$ given by \emph{labelling function} such that  $C_i=\#\set{k\in[\![1,N]\!]}{i_k=i}$. Conditionally on their initial condition, all clusters $X_k(t)$ (starting at $i_k$) are independent from each other, thus the event
	\begin{equation}\label{eq:defAt}
	A_t = \{\forall k\in[\![1,N]\!]\,,\ X_k(t)=1\}
	\end{equation}
	is independent of $(X_k)_{k>N}$ and thus independent of $\tau^0_n$. Then,
	\[\probcin{C}{A_t}{\tau_n>t} = \probcin{C}{A_t}{\min(T_n^1,\ldots,T_n^{N})>t}\,.\]
	Still by independence of the clusters from each other, we claim that
	\begin{equation}\label{eq-old-lemmaAt}
	\probcin{C}{A_t}{\min(T_n^1,\ldots,T_n^{N})>t} = \prod_{k=1}^N \probc{X(t)=1}{X(0)=i_k\,,\
		{T_n}>t}\,
	\end{equation}
	{where $X$ is defined in Sec. \ref{sec:chainX} and $T_n$ in \eqref{eq:def_Tn}}. This equation is clear for $N=1$, and is easily proved by induction. We do it only for $N=2$, for the sake of simplicity. Let $i_1$, $i_2 \in[\![2,n]\!]$.	By definition of $A_t$,
	\[\probcin{C}{A_t}{\min(T_n^1,T^{2}_n)>t} = \frac{\probin{C}{X_1(t)=1\,,\ X_2(t)=1\,,\ T_n^1>t\,,\ T_n^2>t}}{\probin{C}{T_n^1>t\,,\ T_n^2>t}}\,. \]
	Then, by independence of the $X_1$ and $X_2$ conditionally on their initial condition we have
	\begin{multline*}
	\probcin{C}{A_t}{\min(T_n^1,T^{2}_n)>t} = \prod_{k=1}^2 \frac{\prob{X_k(t)=1\,,\  T_n^k>t \,,\ X_k(0)=i_k}}{\prob{T_n^k>t\,, \ X_k(0)=i_k}}\\
	= \prod_{k=1}^2 \probc{X(t)=1}{X(0)=i_k\,,\
		 T_n>t}\,.
	\end{multline*}
	since $X_1$ and $X_2$ are independent copy of $X$, which proves the desired result. Thus, going back to \eqref{eq-old-lemmaAt} and thanks to \eqref{eq:estime_absorbtion_kreer_n} we have
	\begin{equation*}
    \probcin{C}{A_t}{\tau_n>t} 
    \geq  \prod_{k=1}^N \left(1- M_{i_k,n} e^{-\gamma_n t}\wedge 1\right) \geq 1 - e^{-\gamma_n t} \sum_{k=1}^N M_{i_k,n}\,.
	\end{equation*}
	Finally, we obtain
	\begin{multline*}
	\probcin{\Pi^\init}{A_t}{\tau_n>t}
	= \sum_{C\in \Ec_n} \probcin{C}{A_t}{\tau_n>t} \frac{\probin{C}{\tau_n>t}}{\probin{\Pi^\init}{\tau_n>t}} \Pi^\init(C)\\
	\geq 1 -  e^{-\gamma_n t}\sum_{C\in\Ec_n} \sum_{k=1}^N M_{i_k,n} \frac{\probin{C}{\tau_n>t}}{\probin{\Pi^\init}{\tau_n>t}} \Pi^\init(C)
	\geq 1-  e^{-\gamma_n t}\frac{K_n H_n^\init}{\probin{\Pi^\init}{\tau_n>t}}\,,
	\end{multline*}
	{where in the second line, $N$ and $(i_k)_{k=1..N}$ are given by the \emph{labelling} function for each $C\in \Ec_n$}, and using that $\probin{C}{\tau_n>t}\leq 1$ in the last inequality. Remark the expression of $H^\init_n$ is obtained thanks to the definition of $M_{i,n}$ in \eqref{eq:estime_absorbtion_kreer_n} and $f_i^n$ in \eqref{QSD_poisson}. 
\end{proof}
\begin{proof}[Proof of the Theorem \ref{thm:ergodicity-qsd}]
Let the collection of processes $X_1,X_2,\ldots$ (resp. $Y_1, Y_2,\ldots$) being a \emph{particle description} of the BD process that starts from the initial distribution $\Pi^\init$ (resp. from $\delta_{\mathbf 0}$). We couple the processes $X_1,X_2,\ldots$ to the processes  $Y_1,Y_2,\ldots$ as in the proof of Theorem \ref{thm:ergodicity-qsd}, namely, $Y_k(t)=X_{k+N^\init}(t)$ for all $k\geq 1$ and all $t\geq 0$, where $N^\init$ is distributed according to $\Pi^\init$.%

To avoid notation confusion, we write $\tau_n^X$ and $\tau_n^Y$ the first exit time from $\Ec_n$ for the collection of processes $\{X_k\}$ and $\{Y_k\}$, respectively. We define $\tau_n^{0,X}$ and $\tau_n^{0,Y}$, respectively to the processes $\{X_k\}$ and $\{Y_k\}$ likewise $\tau^0_n$ in \eqref{def:tau0}.
Due to the coupling between the $\{X_k\}$ and $\{Y_k\}$, we have
\begin{multline}\label{eq:relation_taun_X_Y}
\tau_n^{0,Y}=\tau_n^{Y}=\inf\set{t\geq 0}{\exists k>0\,, Y_k(t)\geq n+1}\\=\inf\set{t\geq 0}{\exists k > N^\init\,, X_k(t)\geq n+1}=\tau_n^{0,X}\,.
\end{multline}
Remark that each $Y_k$, for $k\geq 1$, is independent of $X_i$ for $i\leq N^\init$, thus independent of $T_n^1,\ldots,T^{N^\init}_n$ {the exit times arising in \eqref{eq:rel-tau-tau0}}. Hence, by \eqref{eq:rel-tau-tau0} and \eqref{eq:relation_taun_X_Y}, {the laws of} the collection of processes $\{Y_k\}$ conditioned to $\tau_n^Y>t$  equals to {the laws of} the collection of processes $\{Y_k\}$ conditioned to $\tau_n^{X}>t$.
Also, we have, for any $i\geq 2$ and $t>0$, $\#\set{k}{X_k(t)=i} = \#\set{k}{Y_k(t)=i}$ on the event $A_t$, given in \eqref{eq:defAt}, since all initial particles being absorbed. Finally, we deduce that
\begin{multline*}
\|\probcin{\Pi^\init}{\mathbf C(t)\in\cdot}{\tau_n>t} - \probcin{\mathbf 0}{\mathbf C(t)\in\cdot}{\tau_n>t}\| \\
\leq \probc{\exists i\geq 2,\, \#\set{k}{X_k(t)=i} \neq \#\set{k}{Y_k(t)=i}}{\tau_n^X>t} 
\leq \probcin{\Pi^\init}{A_t^c}{t<\tau_n^X}\,.
\end{multline*}
The latter, with Lemma \ref{lem:At}, entails
\begin{equation}\label{eq:distance-C-D}
\|\probcin{\Pi^\init}{\mathbf C(t)\in\cdot}{\tau_n>t} - \probcin{\mathbf 0}{\mathbf C(t)\in\cdot}{\tau_n>t}\| \leq  e^{-\gamma_n t}\frac{K_n H_n^\init}{\probin{\Pi^\init}{t<\tau_n}}\,.
\end{equation}
Then, applying estimate \eqref{eq:distance-C-D} with the initial distribution $\Pi_n^\qsd$ since $\espin{\Pi^\qsd_n}{\sum_{i=2}^\infty C_i} = \sum_{i=2}^n f_i^n<\infty$ and by Proposition \ref{prop:QSD}, we deduce
\begin{equation*}
\|\Pi^\qsd_n - \probcin{\mathbf 0}{\mathbf C(t)\in\cdot}{t<\tau_n}\| \leq  e^{-\gamma_n t}K_n e^{J_n t} H_n^\qsd\,.
\end{equation*}
We end the proof by triangular inequality. 
\end{proof}

\section{Estimates on $\tau_n$ and the largest cluster}\label{sec:taun}

In this section we consider the super-critical case $z>z_s$. The analysis of the $\tau_n$ in \eqref{eq:def-tau_n} leads off the simple observation
\[\tau_n>t \Leftrightarrow \forall s\leq t, \max_{1\leq k\leq N(s)} X_k(s) \leq n\,, \]
where $X_1,X_2,\ldots$ is the particle description of the BD process. We prove 

\begin{theorem}\label{thm:tau_n}
Under assumption \eqref{eq:ass_nonexplosive_birthdeath}. Let $z>z_s$ and $\Pi^\init$ a probability distribution on $\Ec_n$ such that $\espin{\Pi^\init}{\sum_{i=2}^\infty C_i}<\infty$. We have
\[\probin{\Pi^\init}{\tau_n>t} \geq G_n^\init e^{-J_nt}\]
 where 
\[G_n^\init = \espin{\Pi^\init}{\prod_{i=1}^n \left(\frac{f_i^n}{Q_iz^i}\right)^{C_i}} \geq  1 - \sum_{i=2}^n \espin{\Pi^\init}{C_i}(1-\tfrac{f_i^n}{Q_iz^i}) \,.\]
\end{theorem}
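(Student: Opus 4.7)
My plan is to exploit the particle decomposition together with the Poisson product structure of the QSD $\Pi_n^\qsd$. The key identity I would first establish is that, for a deterministic initial cluster configuration $C \in \Ec_n$ with $N = \sum_i C_i$ and labels $(i_1,\ldots,i_N)$,
\[
  \probin{C}{\tau_n>t} = \probin{\mathbf 0}{\tau_n>t} \prod_{i=2}^n \bigl(\probc{T_n>t}{X(0)=i}\bigr)^{C_i}.
\]
This follows directly from the decomposition \eqref{eq:rel-tau-tau0}, $\tau_n = \min(\tau_n^0, T_n^1,\ldots,T_n^{N^\init})$, and the independence of the initial clusters $X_1,\ldots,X_{N^\init}$ from the ``later'' particles driving $\tau_n^0$; note that $\tau_n^0$ has the same law under $\probin{C}{\cdot}$ and $\probin{\mathbf 0}{\cdot}$, since the newly activated clusters do not see $C$, and under $\mathbf 0$ we have $\tau_n = \tau_n^0$.

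Given this factorization, the individual cluster contributions are controlled by \eqref{eq:minorboundsTn}, which yields exactly $\probc{T_n>t}{X(0)=i} \geq J_n \sum_{k=i}^n 1/(a_kQ_kz^{k+1}) = f_i^n/(Q_iz^i)$ by the definition of $f_i^n$ in \eqref{QSD_poisson}. What remains is to bound $\probin{\mathbf 0}{\tau_n>t}$ from below by $e^{-J_nt}$. This is the step I expect to be the crux of the argument, and it relies on inverting Proposition \ref{prop:QSD}. Starting from
\[
e^{-J_n t} = \probin{\Pi_n^\qsd}{\tau_n>t} = \sum_{C\in\Ec_n}\Pi_n^\qsd(C)\, \probin{C}{\tau_n>t},
\]
and inserting the factorization above, the Poisson product form of $\Pi_n^\qsd$ collapses each marginal sum through the Poisson probability generating function $\sum_{k\geq 0} e^{-f_i^n}(f_i^n)^k p_i(t)^k/k! = e^{-f_i^n(1-p_i(t))}$ with $p_i(t):=\probc{T_n>t}{X(0)=i}$. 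This gives the identity
\[
e^{-J_n t} = \probin{\mathbf 0}{\tau_n>t}\, \exp\Bigl(-\sum_{i=2}^n f_i^n(1-p_i(t))\Bigr),
\]
and since $p_i(t)\leq 1$ the exponential factor is at most $1$, giving $\probin{\mathbf 0}{\tau_n>t}\geq e^{-J_nt}$.

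Combining these three ingredients yields $\probin{C}{\tau_n>t} \geq e^{-J_n t} \prod_{i=2}^n (f_i^n/(Q_iz^i))^{C_i}$ for every deterministic $C \in \Ec_n$, and taking the expectation under $\Pi^\init$ produces the announced bound $\probin{\Pi^\init}{\tau_n>t}\geq G_n^\init e^{-J_n t}$ (the $i=1$ factor in $G_n^\init$ being trivially $1$ under the conventions $C_1=z$, $f_1^n=z$, $Q_1=1$). For the final linear lower bound on $G_n^\init$, I would set $y_i = f_i^n/(Q_iz^i) \in (0,1]$, apply Bernoulli's inequality termwise, $y_i^{C_i}\geq 1 - C_i(1-y_i)$, and then use the elementary inequality $\prod_i(1-a_i)\geq 1-\sum_i a_i$ for nonnegative $a_i$ (immediate by induction) with $a_i = C_i(1-y_i)$, before taking the expectation under $\Pi^\init$ and invoking linearity.
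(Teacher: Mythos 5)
Your proof is correct, and most of its skeleton coincides with the paper's: the decomposition $\tau_n = \min(\tau_n^0,T_n^1,\ldots,T_n^{N^\init})$ from \eqref{eq:rel-tau-tau0}, the factorization of $\probin{C}{\tau_n>t}$ into the ``fresh'' part $\probin{\mathbf 0}{\tau_n>t}$ times a product of single-cluster survival probabilities, the bound $\probc{T_n>t}{X(0)=i}\geq f_i^n/(Q_iz^i)$ from \eqref{eq:minorboundsTn}, and the closing Bernoulli-type estimate. The genuine difference is in how you obtain $\probin{\mathbf 0}{\tau_n>t}\geq e^{-J_nt}$. The paper proves this via Lemma~\ref{lem:tau_couplage}, a stochastic-domination/coupling argument: any initial law dominated (in the tail-sum order \eqref{hyp:domination}) by $\Pi_n^\qsd$ survives at least as long, and $\delta_{\mathbf 0}$ trivially satisfies the domination. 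You instead exploit the Poisson product form of $\Pi_n^\qsd$: substituting the factorization into $\probin{\Pi_n^\qsd}{\tau_n>t}=e^{-J_nt}$ and summing each marginal via the Poisson generating function gives the exact identity $e^{-J_nt}=\probin{\mathbf 0}{\tau_n>t}\,\exp\bigl(-\sum_{i=2}^n f_i^n(1-p_i(t))\bigr)$, and the bound drops out since the exponential factor is at most $1$. This is shorter and purely algebraic, avoiding the coupling construction entirely; the trade-off is that it produces only the specific inequality for $\delta_{\mathbf 0}$ (indeed an equality, up to the known correction factor), while the paper's lemma is a more general monotonicity statement of independent interest, connected to a maximum principle for the BD model. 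One small bookkeeping caveat in your final step: the chain ``Bernoulli, then $\prod(1-a_i)\geq 1-\sum a_i$'' is only directly licensed when each $a_i=C_i(1-y_i)\leq 1$; you should either dispose of the case $\sum_i C_i(1-y_i)\geq 1$ separately (trivial, since the right side is then nonpositive), or apply the product inequality directly to $C_i$ repeated factors of $1-(1-y_i)$, each with $1-y_i\in[0,1]$, which yields $\prod_i y_i^{C_i}\geq 1-\sum_i C_i(1-y_i)$ in one stroke.
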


In fact, as in the coupling strategy, \eqref{eq:rel-tau-tau0} provides a useful understanding of the statistics of $\tau_n$ by decomposing between the initial cluster from the ones that will appear at later times. We start with the statistics of the later, namely of $\tau_n^0$. The next lemma clearly applies for the initial distribution $\Pi^\init=\delta_{\mathbf 0}$ but is more general, and related to a maximum principle in the BD model \cite{Canizo2019}.
\begin{lemma}\label{lem:tau_couplage}
Under assumption \eqref{eq:ass_nonexplosive_birthdeath} and $z>z_s$. For any  probability distribution $\Pi^\init$ on $\Ec_n$ such that for all $i\in[\![2,n]\!]$ and $k\in \Nb$,
	\begin{equation}\label{hyp:domination}
	\probin{\Pi^\init}{\sum_{i\leq j \leq n}  C_j \geq k} \leq \probin{\Pi^\qsd}{\sum_{i\leq j \leq n}  C_j\geq k}\,,
	\end{equation}
we have
	\begin{equation*}
	\probin{\Pi^\init}{\tau_n>t}\geq e^{-J_n t}\,.
	\end{equation*}
\end{lemma}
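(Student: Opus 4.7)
The strategy reduces the problem to an expectation comparison between $\Pi^\init$ and $\Pi_n^\qsd$, which I would then establish by a coupling of particle descriptions.

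First, by the decomposition \eqref{eq:rel-tau-tau0}, $\tau_n=\tau_n^0\wedge \min_{k\le N^\init}T_n^k$, where $\tau_n^0$ depends only on clusters activating after time $0$ and its law is independent of $\Pi^\init$. Conditioning on $\mathbf C(0)$ and using the conditional independence of $\tau_n^0$ and of the distinct initial clusters, one obtains
\[\probin{\Pi^\init}{\tau_n>t}=\mathbf P(\tau_n^0>t)\,\espin{\Pi^\init}{\prod_{i=2}^n g_i(t)^{C_i}},\qquad g_i(t)=\probc{T_n>t}{X(0)=i}.\]
Specialising to $\Pi^\init=\Pi_n^\qsd$ and using $\probin{\Pi_n^\qsd}{\tau_n>t}=e^{-J_n t}$ from Proposition~\ref{prop:QSD} eliminates $\mathbf P(\tau_n^0>t)$ as a common factor, and the lemma reduces to showing
\[\espin{\Pi^\init}{\prod_{i=2}^n g_i(t)^{C_i}}\;\geq\;\espin{\Pi_n^\qsd}{\prod_{i=2}^n g_i(t)^{C_i}}.\]

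I would then build two particle descriptions on a common probability space — one started from $\Pi^\init$, one from $\Pi_n^\qsd$ — sharing the same Poisson measures for the clusters activating after time $0$, so that the event $\{\tau_n^0>t\}$ is pathwise identical on both sides. List the initial cluster sizes of each process in non-increasing order as $Y_1\ge Y_2\ge\cdots$, padded with the absorbing value $1$ beyond the number of active clusters. The duality $\{S_i\ge k\}=\{Y_k\ge i\}$ rewrites hypothesis \eqref{hyp:domination} as the marginal stochastic dominations $Y_k^\init\preceq_{\mathrm{st}}Y_k^\qsd$ for every $k$. Exploiting that $\Pi_n^\qsd$ is a product of Poisson laws — so $\mathbf C^\qsd(0)$ can be realised as a Poisson point process on $\{2,\ldots,n\}$ amenable to thinning — I would construct $\mathbf C^\init(0)$ on the same space with $Y_k^\init\le Y_k^\qsd$ a.s.\ for all $k$ simultaneously. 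Pairing the $k$-th cluster of each process and running the pair under the standard monotone coupling of the birth--death chain $X$ (available because $X$ is nearest-neighbour on $\Nb$) yields $X_k^\init(t)\le X_k^\qsd(t)$ for all $t$, hence $T_n^{k,\qsd}\le T_n^{k,\init}$ a.s. Any unpaired QSD cluster only further decreases $\tau_n^\qsd$, so in the coupling $\tau_n^\qsd\le \tau_n^\init$ almost surely, and thus $\probin{\Pi^\init}{\tau_n>t}\ge e^{-J_n t}$.

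The hard part is the simultaneous monotone coupling of the order statistics: passing from marginal stochastic domination for each $k$ to a joint coupling with $Y_k^\init\le Y_k^\qsd$ for all $k$ is generally \emph{not} possible from marginal information alone (two-point counterexamples exist). The construction must genuinely use the product-Poisson form of $\Pi_n^\qsd$, together with the non-increasing constraint on both sorted sequences; this is the stochastic counterpart of the ``maximum principle'' for the Becker--D\"oring equations invoked in \cite{Canizo2019}. The remaining steps — the factorisation above and the monotone coupling of $X$ — are routine.
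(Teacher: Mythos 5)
Your proposal follows the same strategy as the paper's proof: couple the initial cluster configurations so that after sorting in non-increasing order one has $Y_k^\init(0)\leq Y_k^\qsd(0)$ for all $k$, then run the standard monotone coupling of the one-cluster birth-death chain $X$ to obtain the pathwise domination of sizes and hence of exit times, so that $\tau_n$ under $\Pi^\init$ stochastically dominates $\tau_n$ under $\Pi_n^\qsd$; Proposition~\ref{prop:QSD} concludes. Your preliminary reduction to the expectation inequality via $\probin{\Pi^\init}{\tau_n>t}=\probin{\mathbf 0}{\tau_n>t}\,\espin{\Pi^\init}{\prod_i g_i(t)^{C_i}}$ reproduces the factorisation \eqref{eq:decompose_C_deter}, but the paper's proof of the lemma itself does not pass through it: the coupling directly compares the exit times and the expectation step is only used later in the proof of Theorem~\ref{thm:tau_n}.

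The difficulty you flag at the end is a genuine one, and in fact it is present in the paper's own proof. Hypothesis~\eqref{hyp:domination} is a collection of \emph{marginal} stochastic dominations of the tail sums $S_i=\sum_{i\leq j\leq n}C_j$, one for each $i$, and the paper asserts, citing Grimmett--Stirzaker Sec.~4.12 (which covers the one-dimensional Strassen coupling), that this yields a \emph{joint} coupling with $S_i^\init\leq S_i^\qsd$ a.s. simultaneously for all $i$. That implication is false in general: even with $\Pi_n^\qsd$ an honest product of Poissons on $\Ec_3$, one can exhibit a $\Pi^\init$ for which all the marginal dominations \eqref{hyp:domination} hold while the Strassen up-set condition fails on a set of the form $\{S_2\geq 2\}\cup\{S_3\geq 1\}$, so no such joint coupling exists. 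The product-Poisson form alone therefore does not repair the gap, and your suggestion to invoke it would need a concrete additional argument that you have not supplied. The natural fix is to strengthen \eqref{hyp:domination} to a multivariate (up-set) stochastic domination of the whole vector $(S_2,\ldots,S_n)$, which by Strassen's theorem directly gives the required coupling; this costs nothing in the paper's application, since the only use of the lemma (in the proof of Theorem~\ref{thm:tau_n}, after the factorisation \eqref{eq:decompose_C_deter}) is with $\Pi^\init=\delta_{\mathbf 0}$, for which the coupling is trivial and the conclusion $\probin{\mathbf 0}{\tau_n>t}\geq e^{-J_n t}$ is safe.
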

\begin{proof}[Proof of Lemma \ref{lem:tau_couplage}]
It is classical that condition \eqref{hyp:domination} ensures there exists randoms $\mathbf C^\init$ and $\mathbf C^\qsd$ distributed according to $\Pi^\init$ and $\Pi^\qsd$, respectively, such that for each $i\in[\![2,n]\!]$,
\begin{equation*}
\sum_{i\leq j \leq n} C_j^\init \leq \sum_{i\leq j \leq n}  C^{\qsd}_j\,,\quad a.s.
\end{equation*}
see \emph{e.g.} \cite[Sec. 4.12]{Grimmett2001}. Then, we may construct the collection of processes $X_1,X_2,\ldots$ (resp. $Y_1, Y_2,\ldots$) as a \emph{particle description} of the BD process associated to $\mathbf C^\init$ (resp. to  $\mathbf C^\qsd$) such that, \emph{a.s.}, for all $i\geq 1$, $X_i(0)\leq Y_i(0)$. A standard coupling between two copies of the chain $X$ from Sec. \ref{sec:chainX} consists in having the same jumps in the two copies as soon as they are equal. Such coupling applied to each couple $(X_i,Y_i)$ then ensures that, for all $i\geq 1$ and $t\geq 0$, we have  $X_i(t)\leq Y_i(t)$ \emph{a.s.} In particular,
\[ \inf\{t>0\,\mid \,\max_{k} X_k(t)>n\} \geq \inf\{t>0\, \mid\, \max_{k} Y_k(t)>n\} \quad  a.s. \]
thus, with Proposition \ref{prop:QSD},
\[	\probin{\Pi^\init}{\tau_n>t}\geq 	\probin{\Pi^\qsd}{\tau_n>t}=e^{-J_nt}\,.\]
\end{proof}

\begin{proof}[Proof of Theorem \ref{thm:tau_n}]
Let $n\geq 2$ and $i \in[\![2,n]\!]$. Define $g_{n,i}(t) = \probc{T_n>t}{X(0)=i}$
where $T_n$ and $X$ are given in Sec. \ref{sec:chainX}. Thanks to \eqref{eq:minorboundsTn}, we have 
\begin{equation}\label{eq_erwan_KMcG}
g_{n,i}(t) \geq \lim_{t\to + \infty} g_{n,i}(t) = J_{n} \sum_{k=i}^{n} \frac{1}{a_kQ_kz^{k+1}} = \frac{f_i^n}{Q_iz^i} \,.
\end{equation}
Let $\psi_n(t,x)=1$ if $\max_{0\leq \tau \leq t} x(\tau) \leq n$ and $0$ otherwise. We have
\[\probin{\Pi^\init}{\tau_n>t}  = \sum_{C\in \Ec_n} \probin{C}{\prod_{i=1}^{N(t)} \psi_n(t,X_i)=1} \Pi^\init(C)\]
where $X_1,X_2,\ldots$ the particle description of the BD process $\mathbf C$ (starting at $\delta_C$). By independence of the particles conditionally to their initial condition,
\begin{multline}\label{eq:decompose_C_deter}
\probin{C}{\prod_{i=1}^{N(t)} \psi_n(t,X_i)=1} =  \probin{C}{\prod_{i=N^\init+1}^{N(t)} \psi_n(t,X_i)=1}\probin{C}{\prod_{i=1}^{N^\init} \psi_n(t,X_i)=1} \\ = \probin{\mathbf 0}{\tau_n>t} \prod_{k=1}^N g_{n,i_k}(t)
\end{multline}
where again, $N$ and $(i_k)_{k=1..N}$ are given by the \emph{labelling} function associated to $C\in \Ec_n$. Combining relations \eqref{eq_erwan_KMcG} and \eqref{eq:decompose_C_deter} with Lemma \ref{lem:tau_couplage} and summing over all initial conditions ends the proof.
\end{proof}
	
\section{Metastability close to z\textsubscript{s}}\label{sec-final}
In this section we assume additionally to \eqref{hyp:H1} and \eqref{hyp:H2}, to fit with \cite{Kreer1993,Penrose1989}, that
\begin{equation}\label{hyp:H3}\tag{H3}
A'< a_i< A i^\alpha \,,\quad \frac{b_{i+1}}{a_{i+1}} + \frac{\kappa}{i^\nu}\leq \frac{b_i}{a_i} \quad \text{and} \quad  z_s e^{Gi^{-\gamma}} \leq \frac{b_i}{a_i} \leq z_s e^{G'i^{-\gamma'}}\,,
\end{equation}
for all $i\geq 2$, where $\alpha,\,\gamma \in(0,1)$, $\gamma',\,\nu>0$, $\kappa$, $A'$, $A$, $G$ and $G'$ positives. We also use the terminology of \cite{Penrose1989} namely a quantity $q(z)$ of $z$ is: exponentially small if $q(z)/(z-z_s)^m$ is bounded for all $m>0$ as $z\!\ssearrow\!z_s$ ($z$ converges to $z_s$ and $z>z_s$); and  at most algebraically large if $(z-z_s)^{m_0}q(z)$ is bounded for some $m_0>0$ as $z\!\ssearrow\!z_s$.  

Assumption \eqref{hyp:H3} ensures the existence of a unique $n^*$ (depending on $z$) such that $b_{n^*+1}/a_{n^*+1} < z < b_{n^*}/a_{n^*}$. The size $n^*$ is interpreted as the nucleus size: for a cluster $X(t)\leq n^*$, $X(t)$ tends to shorten, while for $X(t)> n^*$, it tends to grow. With assumption \eqref{hyp:H3}, $n^*\to\infty$ as $z \ssearrow z_s$. In \cite{Kreer1993,Penrose1989}, $n^*$ is proved to be at most algebraically large. Moreover, the time scale $1/\gamma_{n^*}$ (in Theorem~\eqref{thm:ergodicity-qsd}) is also at most algebraically large, and $J_{n^*}$ (in Theorem~\ref{thm:tau_n}) is exponentially small. We now choose an initial distribution $\Pi^\init$ with support on $\Ec_j$, with $j$ independent of $z$ (no generality is claimed here). We have
\begin{equation}\label{eq:estimeeTVQSD1}
\probin{\Pi^\init}{\tau_{n^*}>t} \geq \left(1 - \sum_{i=2}^j \espin{\Pi^\init}{C_i}(1-\tfrac{f_i^{n^*}}{Q_iz^i})\right) e^{-J_{n^*}t}\,,
\end{equation} 
which is arbitrary close to one for times $t \ll 1/J_{n^*}$ as $f^{n^*}_i/(Q_iz^i)\to 1$ when $z\!\ssearrow\!z_s$. Moreover, we have
\begin{equation}\label{eq:estimeeTVQSD2}
\|\probcin{\Pi^\init}{\mathbf C(t) \in \cdot}{\tau_{n^*}>t} - \Pi_n^\qsd\| \leq (\tfrac{H_j^\init}{G^\init_j}+H_{n^*}^\qsd ) K_{n^*} e^{J_{n^*} t-\gamma_{n^*} t}
\end{equation}  
which is arbitrary small for times $1/\gamma_{n^*} \ll t \ll 1/J_{n^*}$. Indeed, note that $a_iQ_iz^i$ is decreasing up to the size $n^*$, thus $K_{n^*}^2  \leq \tfrac{a_1z}{A'}n^*$. Hence, $K_{n^*}$ as well as $H^\qsd_{n^*}$ are at most algebraically large. Equations \eqref{eq:estimeeTVQSD1}-\eqref{eq:estimeeTVQSD2} show the QSD is indeed a metastable state.

\providecommand{\bysame}{\leavevmode\hbox to3em{\hrulefill}\thinspace}
\providecommand{\MR}{\relax\ifhmode\unskip\space\fi MR }
\providecommand{\MRhref}[2]{%
  \href{http://www.ams.org/mathscinet-getitem?mr=#1}{#2}
}
\providecommand{\href}[2]{#2}

\bigskip

\noindent {\bfseries Acknowledgments.} E.~H. has to thanks the partial support by FONDECYT project n. 11170655.

\end{document}